\newtheorem{mainthm}{Theorem}
\newtheorem{thm}{Theorem}[section]
\newtheorem{lem}[thm]{Lemma}
\newtheorem{prop}[thm]{Proposition}
\theoremstyle{definition}
\newtheorem{defn}[thm]{Definition}
\newtheorem{example}[thm]{Example}
\theoremstyle{remark}
\newtheorem{remark}[thm]{Remark}
\numberwithin{equation}{section}
\newcommand{\C}{\mathbb{C}} 
\newcommand{\R}{\mathbb{R}} 
\newcommand{\Q}{\mathbb{Q}} 
\newcommand{\Z}{\mathbb{Z}} 
\newcommand{\A}{\mathbb{A}} 
\newcommand{\G}{\mathbb{G}} 
\renewcommand{\O}{\mathcal{O}}		
\DeclareMathOperator{\Gal}{Gal}		
\DeclareMathOperator{\Isom}{Isom}	
\DeclareMathOperator{\Aut}{Aut}		
\DeclareMathOperator{\Hom}{Hom}		
\DeclareMathOperator{\End}{End}		
\DeclareMathOperator{\image}{im}		
\DeclareMathOperator{\ab}{ab}		
\newcommand{\rec}{r}					
\DeclareMathOperator{\id}{id}		
\DeclareMathOperator{\Tr}{Tr}		
\DeclareMathOperator{\im}{Im}		
\DeclareMathOperator{\art}{art}		
\DeclareMathOperator{\res}{res}		
\DeclareMathOperator{\cyc}{cyc}		
\DeclareMathOperator{\Cl}{Cl}		
\DeclareMathOperator{\Sh}{Sh}		
\DeclareMathOperator{\mo}{mod}		
\DeclareMathOperator{\sgn}{sgn}		
\DeclareMathOperator{\GL}{GL}		
\DeclareMathOperator{\SL}{SL}		
\DeclareMathOperator{\ad}{ad}		
\DeclareMathOperator{\der}{der}		
\DeclareMathOperator{\Set}{Set}		
\DeclareMathOperator{\CM}{CM} 		
\DeclareMathOperator{\VZ}{\mathfrak{S}} 		
\newcommand{\plecgpsemi}{S_\Sigma\ltimes\Gamma_F^\Sigma} 
\newcommand{\plecgpintro}{S_r\ltimes\Gamma_F^r} 			
\newcommand{\plecgpaut}{\Aut_F(F\otimes_\Q\overline{\Q})} 
\newcommand{\plecgphash}{\Gamma_\Q\#\Gamma_F} 			
\DeclareMathOperator{\pl}{pl}							
\newcommand{\plecgp}{\Gamma^{\pl}} 						
\newcommand{\plecgpcm}{\Gamma^{\pl, R}_{\CM}}				
\newcommand{\plecgppi}{\Gamma^{\pl, R}_{\pi_0}}				
\DeclareMathOperator{\product}{prod}			
\newcommand{\Product}{P}						
\newcommand{\plecpiprod}{\Product_{\pi_0}}			
\begin{document}

\title[Plectic Galois action on Hilbert modular varieties]{Plectic Galois action on CM points and connected components of Hilbert modular varieties}
\author{Marius Leonhardt}
\address{Mathematisches Institut, Universität Heidelberg, Im Neuenheimer Feld 205, 69120 Heidelberg, Germany}
\email{mleonhardt@mathi.uni-heidelberg.de}

\begin{abstract}
	We expand on Nekov\'a\v{r}'s construction of the plectic half transfer to define a plectic Galois action on Hilbert modular varieties.
	More precisely, we study in a unifying fashion Shimura varieties associated to groups that differ only in the centre from $R_{F/\Q}\GL_2$.
	We define plectic Galois actions on the CM points and on the set of connected components of these Shimura varieties, and show that these two actions are compatible.
	This extends the plectic conjecture of Nekov\'a\v{r}--Scholl.
\end{abstract}

\maketitle

\tableofcontents

\section{Introduction}

In this article we study new symmetries in the theory of complex multiplication (CM).
The Main Theorem of CM due to Tate \cite{tate} and Deligne \cite{ln900IV} describes the Galois conjugate of a CM abelian variety and consequently the action of the absolute Galois group $\Gamma_\Q$ of $\Q$ on the CM points of certain Shimura varieties.
For one of these Shimura varieties --- the PEL Hilbert modular variety --- Nekov\'a\v{r} \cite{ne} extended this Galois action on the CM points to a certain subgroup of the plectic Galois group defined below.

The main purpose of this article is to extend Nekov\'a\v{r}'s results to all Shimura varieties whose associated groups differ only in the centre from $R_{F/\Q}\GL_2$, where $F$ is a totally real field.
This is motivated by the so-called ``plectic conjecture'' of Nekov\'a\v{r}--Scholl \cite{nescholl}.
It predicts an additional ``plectic'' structure of motivic origin on the Shimura variety associated to $R_{F/\Q} \GL_2$ --- the (non-PEL) Hilbert modular variety.
By studying all groups that differ in the centre from $R_{F/\Q}\GL_2$, we both generalise and interpolate between \cite{ne} and \cite{nescholl}.

Fix a totally real field $F$ of degree $[F:\Q]=r$.
The plectic Galois group is the semi-direct product $\plecgpintro$ of the symmetric group $S_r$ with $r$-tuples of elements of the absolute Galois group of $F$.
A choice of coset representatives $s_i$ for $\Gamma_\Q/\Gamma_F$ yields an embedding of $\Gamma_\Q$ into $\plecgpintro$ by sending $\gamma$ to $(\sigma,(h_i)_{1\leq i\leq r})$ determined by
\[
\gamma s_i = s_{\sigma(i)} h_i, \quad 1\leq i\leq r.
\]

The PEL Hilbert modular variety $\Sh(G_0,X_0)$ is the Shimura variety associated to the group $G_0$ defined by the Cartesian diagram
\begin{equation}\label{diag:defG-HMV}
\begin{tikzcd}
G_0 \arrow[r, hook] \arrow[d]  & R_{F/\mathbb{Q}}\GL_{2} \arrow[d, "R_{F/\mathbb{Q}}(\det)"] \\
\mathbb{G}_m \arrow[r, hook] & R_{F/\mathbb{Q}}\mathbb{G}_m.
\end{tikzcd}
\end{equation}

The Shimura variety $\Sh(G_0,X_0)$ is of PEL type; it parametrises isomorphism classes of abelian varieties with real multiplication by $F$ equipped with extra structure.
If $P$ is a special point of $\Sh(G_0,X_0)$, then $P$ corresponds to a CM abelian variety $A$ of type $(K,\Phi)$ for a totally imaginary quadratic extension $K$ of $F$ and a CM type $\Phi$ of $K$.
For $\gamma\in\Gamma_\Q$, the Galois conjugate ${}^\gamma\! P$ of $P$ thus corresponds to the Galois conjugate ${}^\gamma\! A$ of $A$.
The Main Theorem of CM describes ${}^\gamma\! A$ via Tate's half transfer $F_\Phi(\gamma)\in\Gamma_K^{\ab}$ and a suitably normalised preimage of $F_\Phi(\gamma)$ under the Artin map, the Taniyama element
\[
f_\Phi(\gamma)\in \A_{K,f}^\times/K^\times.
\]

To extend the Galois action on the CM points of $\Sh(G_0,X_0)$ to a plectic Galois action, Nekov\'a\v{r} defines a plectic half transfer $F_\Phi\colon \plecgpintro \to \Gamma_K^{\ab}$ extending Tate's half transfer.
For elements of a certain subgroup $(\plecgpintro)_0$ of $\plecgpintro$, he defines a plectic Taniyama element taking values in $\A_{K,f}^\times/K^\times$ as a class-field-theoretic preimage of the plectic half transfer.
The action of $(\plecgpintro)_0$ on the CM points of $\Sh(G_0,X_0)$ is then defined by replacing the Taniyama element by its plectic counterpart.

The group $G_0$ is closely related to $R_{F/\Q}\GL_2$. 
In this article we study Shimura varieties associated to groups $G$ that are related to $R_{F/\Q}\GL_2$ by the Cartesian diagram
\begin{equation}\label{diag:defG-HMV2}
\begin{tikzcd}
G \arrow[r, hook] \arrow[d]  & R_{F/\mathbb{Q}}\GL_{2} \arrow[d, "R_{F/\mathbb{Q}}(\det)"] \\
R \arrow[r, hook] & R_{F/\mathbb{Q}}\mathbb{G}_m,
\end{tikzcd}
\end{equation}
that is, we replace $\G_m$ in \eqref{diag:defG-HMV} with an arbitrary $\Q$-algebraic torus $R$ with $\G_m \subset R \subset R_{F/\Q}\G_m$.
For example, for $R=R_{F/\Q}\G_m$ the group $G$ is equal to $R_{F/\Q}\GL_2$ whose associated Shimura variety is the non-PEL Hilbert modular variety.

For the rest of this introduction, fix such a torus $R$ and the corresponding group $G$.
In Definition \ref{def:plec-CM} we define a subgroup $(\plecgpintro)^R_{\CM}$ of $\plecgpintro$.
The first result is

\begin{mainthm}[Theorem \ref{thm:plecact-CM}]\label{thm:plecact-CM-intro}
	The group $(\plecgpintro)^R_{\CM}$ acts on the CM points of the Shimura variety $\Sh(G,X)$, extending the action of $\Gamma_\Q$.
\end{mainthm}

To prove it, we follow a strategy similar to Nekov\'a\v{r}'s.
Namely, we start by describing the points of $\Sh(G,X)$ in terms of abelian varieties with real multiplication equipped with extra structure, see Theorem \ref{thm:moduliHMV-general}, and use Nekov\'a\v{r}'s plectic half transfer.
The main new contribution of this article is to define the plectic Taniyama element on the entire plectic group $\plecgpintro$, see Definition \ref{def:plecTanielt}.
The definition depends on a choice of splitting $\chi_F$ of the Artin map $\rec_F\colon \A^\times_{F,f}/F^\times_{>0} \to \Gamma_F^{\ab}$.

The definition of the plectic Taniyama element on all of $\plecgpintro$ provides enough flexibility to define a plectic action on CM points for every choice of $R$.
In the theorem, we still have to restrict to a subgroup $(\plecgpintro)^R_{\CM}$ of $ \plecgpintro$ because otherwise the action on the polarisation class of the CM abelian variety would not be well defined.
For example, in the special case where $R=R_{F/\Q} \G_m$, the subgroup $(\plecgpintro)^R_{\CM}$ is equal to $\plecgpintro$.

However, to the best of our knowledge the plectic action on CM points does depend on the choice of splitting $\chi_F$.
In particular, in the case of $R=R_{F/\Q} \G_m$ where the action of the full plectic group should (conjecturally) be canonical this dependence remains a mystery.

The second main contribution of this article is the definition of a canonical plectic action on the connected components of the Shimura varieties $\Sh(G,X)$, independent of the choice of $\chi_F$.
Namely, in Definition \ref{def:pi0-plec} we define another plectic group $(\plecgpintro)^R_{\pi_0}$ and an action of this group on the set $\pi_0(\Sh(G,X))$ of connected components of the Shimura variety $\Sh(G,X)$.
This action extends the $\Gamma_\Q$-action, and moreover the group $(\plecgpintro)^R_{\CM}$ canonically embeds into $(\plecgpintro)^R_{\pi_0}$, see Proposition \ref{lem:plecCM-inside-pi0}.
Using the description of the set of connected components of a Shimura variety as a zero-dimensional Shimura variety in combination with the plectic actions defined above, we prove our main result:

\begin{mainthm}[Theorem \ref{thm:pi0-plec-equiv}]\label{thm:pi0-plec-equiv-intro}
	The $\pi_0$-map restricted to CM points is $(\plecgpintro)^R_{\CM}$-equivariant.
\end{mainthm}

We view this result as a sanity check for the soundness of the plectic conjectures.
Moreover, as the plectic action on connected components is independent of $\chi_F$, this theorem reassures us that the dependence of the plectic action on CM points on the choice of $\chi_F$ is relatively mild. 


Nekov\'a\v{r}--Scholl conjecture a plectic structure for Shimura varieties associated to groups of the form $R_{F/\Q} H$, for a reductive group $H$ over $F$.
In \cite{nescholl} they outline how this plectic structure should manifest in various realisations and sketch arithmetic applications to special values of $L$-functions.
For example, on the \'etale cohomology groups of Shimura varieties, a plectic structure is simply an action of the plectic group extending the Galois action.

In this article, we describe plectic structures on CM points and on the set of connected components for the Shimura variety associated to $G$.
However, the group $G$ is not of the form $R_{F/\Q} H$ for any $H$ unless $R=R_{F/\Q}\G_m$, in which case $G=R_{F/\Q}\GL_2$.
In this special case, our results fit into the framework of \cite{nescholl}. 
Moreover, we show that if we want the \emph{full} plectic group to act (on either CM points or on connected components), it is essential to work with the Shimura variety with the ``largest possible centre'' $R=R_{F/\Q}\G_m$.
And it is the full plectic group that is conjecturally responsible for arithmetic applications.

By proving our results for all $R$ and hence for all $G$ that differ in the centre from $R_{F/\Q}\GL_2$, we add some flexibility to the plectic framework.
In the case $R=\G_m$ the action in Theorem \ref{thm:plecact-CM-intro} is precisely the one discovered in \cite{ne}; we thus bridge the gap between \cite{ne} and \cite{nescholl}.
The definition of the plectic action on $\pi_0(\Sh(G,X))$ and Theorem \ref{thm:pi0-plec-equiv-intro} are new results even for $R\in\{\G_m, R_{F/\Q}\G_m\}$.

For other plectic structures in an Archimedean setup, see \cite{neschollHodge} for the Hodge realisation and \cite{xi-plecgreen} for a plectic Green function and applications to multiple zeta values.
Moreover, \cite{blake} constructs a plectic Taniyama group similar to Langlands' Taniyama group \cite{ln900III}.
Tamiozzo \cite{tamiozzothesis} proved a function field analogue of the plectic conjecture and proposed a local variant of the conjectures, using Scholze's diamonds and mixed characteristic shtukas.

\subsection*{Structure of the article}

Section \ref{se:setup} serves a threefold purpose.
We introduce notation, study the Shimura varieties $\Sh(G,X)$, and present the results of CM theory in a form most amenable to generalisation.
In \S\ref{sse:hmv} we prove that $G$ and $R_{F/\Q}\GL_2$ differ only in the centre, see Lemma \ref{lem:Gad}, and that the complex points of $\Sh(G,X)$ can be described using abelian varieties equipped with real multiplication and an $R(\Q)$-class of a polarisation, see Theorem \ref{thm:moduliHMV-general}.
In particular, this allows us to study the special, i.\,e.\ CM, points of $\Sh(G,X)$ in \S\ref{sse:CMpts}.
For a fixed CM field $K$, we describe the $\Gamma_\Q$-action on points of $\Sh(G,X)$ with CM by $K$ of CM type $\Phi$ using Tate's half transfer $F_\Phi\colon \Gamma_\Q \to \Gamma_K^{\ab}$, the Taniyama element
\[
f_\Phi\colon \Gamma_\Q \longrightarrow \A_{K,f}^\times/K^\times,
\]
and the Main Theorem of CM, see Theorem \ref{thm:conjCMpt}.

In Section \ref{se:plecCM} we explain the plectic generalisation of CM theory.
We start \S\ref{sse:plecgp} with properties of the plectic Galois group $\plecgp$, a choice-free version of $\plecgpintro$, and then recall the definition of Nekov\'a\v{r}'s plectic half transfer $F_\Phi\colon \plecgp \longrightarrow \Gamma_K^{\ab}$.
In \S\ref{sse:plecTani} we use a splitting $\chi_F$ of the reciprocity homomorphism $\rec_F\colon \A_{F,f}^\times/F^\times_{>0} \to \Gamma_F^{\ab}$ to define the plectic Taniyama element
\[
f_\Phi\colon \plecgp \longrightarrow \A_{K,f}^\times/K^\times
\]
as a suitably normalised preimage of $F_\Phi$ under $\rec_K\colon \A_{K,f}^\times/K^\times \to \Gamma_K^{\ab}$.
We prove that the plectic Taniyama element extends the (non-plectic) Taniyama element, see Lemma \ref{lem:sameTanielt}.
Then we define the group $\plecgpcm$ and an action of this group on the CM points of $\Sh(G,X)$, proving Theorem \ref{thm:plecact-CM}.

Section \ref{se:plecpi0} is devoted to the set $\pi_0(\Sh(G,X))$ of connected components of the Shimura variety $\Sh(G,X)$.
Using the general machinery of Shimura varieties, we prove in Lemma \ref{lem:pi0SV} that this set is equal to the zero-dimensional Shimura variety $\Sh(R,\VZ)$ and recall the action of $\Gamma_\Q$ on it, which is given by a certain reciprocity homomorphism, see Definition \ref{def:Galact-pi0}.
We show in Lemma \ref{lem:pi0CR} that $\pi_0(\Sh(G,X))=\pi_0(R(\A)/R(\Q))$, enabling us to define the plectic group $\plecgppi$ and an action of this group on $\pi_0(\Sh(G,X))$ extending the Galois action, see Definition \ref{def:pi0-plec}.
In Proposition \ref{lem:plecCM-inside-pi0} we prove that $\plecgpcm$ canonically embeds into $\plecgppi$, and then prove our main result in Theorem \ref{thm:pi0-plec-equiv}.

\subsubsection*{Acknowledgements}

This article is a condensed version of my PhD thesis \cite{leon-thesis}, carried out at the University of Cambridge.
I would like to thank my supervisor Tony Scholl for his ongoing support, and everyone who proofread parts of my thesis.
I would also like to thank Fred Diamond and Shu Sasaki for very helpful discussions about Hilbert modular varieties, and the anonymous referee for various helpful suggestions to improve the exposition.
During the PhD I was supported by Trinity College and EPSRC studentship \#1648608.
I also acknowledge support from the Deutsche Forschungsgemeinschaft (DFG, German Research Foundation) through TRR 326 Geometry and Arithmetic of Uniformized Structures, project number 444845124.

\section{Set-up}\label{se:setup}

\subsection{Notation}\label{sse:not}

We denote the adeles (resp.\ finite adeles) of a number field $k$ by $\A_k$ (resp.\ $\A_{k,f}$); the ideles (resp.\ finite ideles) of $k$ are denoted by $\A_k^\times$ (resp.\ $\A_{k,f}^\times$).
If $k=\Q$, we usually drop the index $\Q$ from notation.
The idele class group is denoted by $C_k=\A_k^\times/k^\times$.

Throughout, we let $\overline{\Q}$ be the algebraic closure of $\Q$ inside $\C$ and denote complex conjugation by $c$ or $z \mapsto \bar{z}$.
For simplicity of notation, we assume that all number fields are embedded into $\overline{\Q}$.
For a number field $k$, we let $\Gamma_k:=\Gal(\overline{\Q}/k)$ be the absolute Galois group of $k$.
We write $\chi_{\cyc}\colon \Gamma_{\Q} \to \hat{\Z}^\times$ for the cyclotomic character.

We write $\art_k\colon C_k \to \Gamma_k^{\ab}$ for the Artin map of $k$, a topological group homomorphism characterised by sending uniformisers to geometric Frobenius elements; the Artin map is surjective with kernel equal to the identity component $C_k^0$ of $C_k$ and thus induces an isomorphism $\art_k\colon \pi_0(C_k) \xrightarrow{~\sim~} \Gamma_k^{\ab}$.
The Artin map also induces a surjective homomorphism $\rec_k\colon \A_{k,f}^\times/k^\times_{>0} \twoheadrightarrow \Gamma_k^{\ab}$, where $k^\times_{>0}$ denotes the totally positive elements of $k$.

If $k'/k$ is a finite extension of number fields, then $\Gamma_{k'}$ is a finite index subgroup of $\Gamma_k$ and we get the transfer map $V_{k'/k}\colon \Gamma_k^{\ab} \to \Gamma_{k'}^{\ab}$.
Its compatibility with the Artin maps of $k$ and $k'$ is expressed by the commutative diagram \cite[(13), p.\ 197]{casfro-gcft}
\begin{equation*}
\begin{tikzcd}
\A_{k'}^\times/k'^\times \arrow[r, "\art_{k'}"]     & \Gamma_{k'}^{\ab}                        \\
\A_k^\times/k^\times \arrow[r, "\art_k"'] \arrow[u] & {\Gamma_k^{\ab},} \arrow[u, "V_{k'/k}"']
\end{tikzcd}
\end{equation*}
and the analogous diagram for the maps $r_k$ and $r_{k'}$ commutes, too.

\subsection{Variants of the Hilbert modular variety}\label{sse:hmv}

Throughout this article, fix a totally real field $F\subset \overline{\Q}$.
Let $\Sigma:=\Sigma_F:=\Hom(F,\overline{\Q})$ be the set of embeddings of $F$ into $\overline{\Q}$.
Let $G_1$ be the algebraic group $R_{F/\Q}\GL_2$ (Weil restriction of scalars) defined over $\Q$, and let $X_1$ be the $G_1(\R)$-conjugacy class of the morphism $h\colon \mathbb{S} \to (G_1)_\R$ that is given, on $\R$-points, by
\begin{align*}
h(i)=\left(\begin{pmatrix} 0 & -1 \\ 1 & 0 \end{pmatrix}\right)_{x\in\Sigma_F} \in \GL_2(\R)^\Sigma.
\end{align*}
Here $\mathbb{S}$ denotes the Deligne torus $R_{\C/\R}\G_m$; also note that $G_1(\R)=\GL_2(\R)^\Sigma$.
By letting $\GL_2(\R)^\Sigma$ act on $(\C\setminus\R)^\Sigma$ by componentwise M\"obius transformations, we identify $X_1$ with $(\C\setminus\R)^\Sigma$ by mapping $h$ to $(i,\dots,i)$.
The pair $(G_1,X_1)$ is a Shimura datum, i.\,e.\ it satisfies the axioms \cite[(2.1.1.1-3)]{del-vds}.
The associated Shimura variety
\[
\Sh(G_1,X_1):=\varprojlim_U G_1(\Q)\backslash \left[X_1 \times G_1(\A_f)/U\right] = G_1(\Q)\backslash\left[X_1\times G_1(\A_f)/\overline{Z_1(\Q)}\right]
\]
is called the \emph{Hilbert modular variety} associated to $F$.
Here $U$ runs over all compact open subgroups of $G_1(\A_f)$, $Z_1$ denotes the centre of $G_1$, and the second identity is \cite[(1)]{orr}, see also \cite[Prop.\ 2.1.10]{del-vds}.
Note that in this article we will usually work with the projective limit as above, which is a Shimura \emph{pro-variety} in the terminology of \cite{orr}, being the projective limit of the varieties $\Sh_U(G_1,X_1)$.
We have chosen to simply call $\Sh(G_1,X_1)$ a Shimura variety as well as this should not cause any confusion.

We study variants of the Hilbert modular variety by varying the centre of $G_1$.
To that end, fix an algebraic torus $R$ over $\Q$ with
\[
\G_m \hookrightarrow R \hookrightarrow R_{F/\Q}\G_m
\]
such that the composite morphism $\G_m \hookrightarrow R_{F/\Q}\G_m$ is given by the inclusion $\Q^\times \hookrightarrow F^\times$ on $\Q$-points.
We define the algebraic groups $G_0$ and $G$ by the two Cartesian squares
\begin{equation}\label{diag:defG}
\begin{tikzcd}
G_0 \arrow[r, hook] \arrow[d] & G \arrow[r, hook] \arrow[d] & G_1 \arrow[d, "d"] \\
\G_m \arrow[r, hook]          & R \arrow[r, hook]           & R_{F/\Q}\G_m,
\end{tikzcd}
\end{equation}
compare \eqref{diag:defG-HMV} and \eqref{diag:defG-HMV2}.
On $\Q$-points, the morphism $d$ is the determinant map $\GL_2(F) \to F^\times$.
More conceptually, it fits into the short exact sequence
\begin{equation} \label{diag:ses-Gder}
\begin{tikzcd}
1 \arrow[r] & G_1^{\der} \arrow[r] & G_1 \arrow[r,"d"] & R_{F/\Q}\G_m \arrow[r] & 1,
\end{tikzcd}
\end{equation}
where $G_1^{\der}$ denotes the derived group of $G_1$.
It is equal to $R_{F/\Q}\SL_2$.

\begin{lem}\label{lem:Gad}
	$G$ and $G_1$ have the same derived group.
	Consequently, they also have the same adjoint group, the centre $Z$ of $G$ is equal to $G \cap Z_1$, and $G/G^{\der}=R$.
\end{lem}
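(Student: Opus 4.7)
The plan is to exploit the fact that $G \hookrightarrow G_1$ is defined as the preimage of the subtorus $R \hookrightarrow R_{F/\Q}\G_m$ under the determinant map $d$, together with the short exact sequence \eqref{diag:ses-Gder}. Since $G_1^{\der} = R_{F/\Q}\SL_2 = \ker(d)$, this subgroup lies inside $G$, and \eqref{diag:ses-Gder} restricts to the short exact sequence
\[
1 \longrightarrow G_1^{\der} \longrightarrow G \xrightarrow{\;d\;} R \longrightarrow 1.
\]
From this single diagram every assertion of the lemma will be extracted.

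For the derived group I would argue by two inclusions. Since $R$ is a torus and hence commutative, the displayed surjection $G \twoheadrightarrow R$ with kernel $G_1^{\der}$ forces $G^{\der} \subseteq G_1^{\der}$. Conversely, $\SL_2$ is a perfect algebraic group, so after base change to $\overline{\Q}$ the group $G_1^{\der}$ becomes a product of copies of $\SL_{2,\overline{\Q}}$ and is still perfect; thus $G_1^{\der} = (G_1^{\der})^{\der} \subseteq G^{\der}$, the last inclusion using $G_1^{\der} \subseteq G$. The identity $G/G^{\der} = R$ then falls out of the displayed sequence.

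For the centre, the inclusion $G \cap Z_1 \subseteq Z(G)$ is immediate because $Z_1$ centralises all of $G_1 \supseteq G$. For the reverse, an element $z \in Z(G)$ centralises $G^{\der} = G_1^{\der}$, and by the standard fact that in a connected reductive group the centraliser of the derived subgroup equals the centre, this places $z$ inside $Z_1$, hence inside $G \cap Z_1$. Finally, writing $G^{\ad} = G^{\der}/Z(G^{\der})$ and likewise for $G_1$, the equality of derived subgroups forces equality of adjoint groups. The only nontrivial inputs are the perfection of $R_{F/\Q}\SL_2$ and the centraliser-equals-centre identity for connected reductive groups; both are standard, so no serious obstacle is expected.
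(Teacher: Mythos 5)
Your proof is correct and follows essentially the same route as the paper: both inclusions for the derived group come from the commutativity of $R$ and the perfection (semisimplicity) of $G_1^{\der}=\ker(d)\subseteq G$, and $G/G^{\der}=R$ falls out of the restricted exact sequence. The only cosmetic difference is the centre/adjoint step, where the paper defines $Z=\ker(G\to G^{\ad})$ and uses that a reductive group and its derived group have the same adjoint group, whereas you invoke the equivalent standard fact that in a connected reductive group the centraliser of the derived subgroup is the centre; both are interchangeable here.
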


We therefore say that $G$ \emph{differs only in the centre} from $G_1$.

\begin{proof}
	By \eqref{diag:ses-Gder} we have $\ker(d)=G_1^{\der}$.
	By \eqref{diag:defG}, the map $d|_G$ factors through $R$, and $R$ is commutative, hence $\ker(d)$ contains $G^{\der}$.
	On the other hand, the Cartesian diagram \eqref{diag:defG} also implies that $G$ contains $\ker(d)=G_1^{\der}$, and hence $G^{\der}$ contains $(G_1^{\der})^{\der} = G_1^{\der}$, where the last equality holds because $G_1^{\der}$ is semisimple.
	We conclude that $G^{\der}=G_1^{\der}$.
	
	Moreover, the adjoint group of a reductive group is the same as the adjoint group of its derived group, thus $G^{\ad}=G_1^{\ad}$.
	Then
	\[
	Z=\ker(G\to G^{\ad}) = \ker(G\hookrightarrow G_1 \to G_1^{\ad} = G^{\ad}) = G \cap Z_1.
	\]
	
	Finally,
	\[
	\ker(d|_G\colon G \to R) = \ker(d)\cap G = G_1^{\der}\cap G = G^{\der}.
	\]
\end{proof}

The morphism $h\colon \mathbb{S} \to (G_1)_\R$ factors through $(G_0)_\R$. 
Let $X$ be the $G(\R)$-conjugacy class of $h$.
We have $X\subset X_1$, and we can describe $X$ more concretely in terms of upper and lower half planes.
Let $\sgn\colon \R^\times \to \{\pm 1\}$ be the sign function, and define the subgroup 
\[
\VZ:=\left(R(\R)\cdot (\R^\times_{>0})^{\Sigma}\right)/(\R^\times_{>0})^{\Sigma} \subset (\R^\times)^{\Sigma}/(\R^\times_{>0})^{\Sigma} =\{\pm 1\}^{\Sigma}.
\]

\begin{lem}\label{lem:XVZ}
	Under the identification $X_1 = (\C\setminus\R)^\Sigma$, the $G(\R)$-conjugacy class $X$ corresponds to
	\begin{align}\label{def:X-VZ} 
	\{(z_x)_{x\in\Sigma}\in (\C\setminus\R)^{\Sigma}~|~ (\sgn \im z_x)_{x\in\Sigma} \in \VZ\}.
	\end{align}
\end{lem}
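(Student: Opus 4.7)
The plan is to exploit the standard formula for the Möbius action of $\GL_2(\R)$: for $g=\begin{pmatrix} a & b \\ c & d\end{pmatrix}\in\GL_2(\R)$ and $z\in\C\setminus\R$ one has $\im(g\cdot z) = (\det g \cdot \im z)/|cz+d|^2$, so $\sgn\im(g\cdot z) = \sgn(\det g)\cdot \sgn(\im z)$. Therefore the surjection
\[
\sigma\colon X_1 = (\C\setminus\R)^{\Sigma} \longrightarrow \{\pm 1\}^{\Sigma}, \qquad (z_x)_x \mapsto (\sgn\im z_x)_x,
\]
is $G_1(\R)$-equivariant if $G_1(\R)=\GL_2(\R)^{\Sigma}$ acts on $\{\pm 1\}^{\Sigma}$ via $g\cdot\varepsilon = (\sgn\det g_x)_x\cdot\varepsilon$. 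The fibres of $\sigma$ are precisely the $2^r$ connected components of $X_1$. The whole task then reduces to two computations: (i) determine the image $\sigma(X)$, and (ii) check that $X$ is a union of full fibres of $\sigma$. Together these give $X=\sigma^{-1}(\sigma(X))$, which is exactly the set claimed in the lemma.

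For (i), since $X=G(\R)\cdot(i,\ldots,i)$ and $\sigma(i,\dots,i)=(+1,\dots,+1)$, equivariance yields
\[
\sigma(X) = \bigl\{(\sgn\det g_x)_x : g\in G(\R)\bigr\} = \image\!\Bigl(G(\R)\xrightarrow{\,d\,} R(\R) \to \{\pm 1\}^{\Sigma}\Bigr),
\]
where the last map is reduction modulo $(\R^\times_{>0})^{\Sigma}$. By definition of $\VZ$, this image equals $\VZ$ \emph{provided} $d\colon G(\R)\to R(\R)$ is surjective. That surjectivity comes from the long exact sequence in Galois cohomology attached to the short exact sequence
\[
1 \to G_1^{\der} \to G \xrightarrow{\,d|_G\,} R \to 1
\]
(using Lemma \ref{lem:Gad}), together with the vanishing $H^1(\R, R_{F/\Q}\SL_2)=\prod_{\Sigma} H^1(\R,\SL_2)=0$, which holds because $\SL_2$ is simply connected over $\R$.

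For (ii), the inclusion $G_1^{\der}\subset G$ implies that $X$ is stable under $G_1^{\der}(\R)=\SL_2(\R)^{\Sigma}$. Because $\SL_2(\R)$ acts transitively on both the upper and the lower half plane, $G_1^{\der}(\R)$ acts transitively on every connected component of $X_1$. Consequently every $G_1^{\der}(\R)$-orbit inside $X_1$ is a full connected component of $X_1$, hence $X$ is a union of such components, namely those indexed by $\sigma(X)=\VZ$. This establishes the description \eqref{def:X-VZ} of $X$.

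The main obstacle is the surjectivity of $d\colon G(\R)\to R(\R)$; this is the one non-formal input and rests on the Galois-cohomological vanishing $H^1(\R,\SL_2)=0$. The remainder of the argument is a direct manipulation of the Möbius action together with the standard transitivity of $\SL_2(\R)$ on the half planes.
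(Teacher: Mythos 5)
Your proof is correct, and it packages the argument a bit differently from the paper. The paper argues directly with the two inclusions: the containment $X\subset\{(z_x):(\sgn\im z_x)_x\in\VZ\}$ via the determinant formula for $\im(g\cdot z)$, and the converse by first producing $g\in G(\R)$ whose componentwise determinant signs match $(\sgn\im z_x)_x$ (this only needs an element of $G(\R)$ with prescribed determinant in $R(\R)$, e.g.\ $\diag(r,1)$ for suitable $r\in R(\R)$) and then correcting $g\cdot(i,\dots,i)$ to $(z_x)_x$ by an element of $\SL_2(\R)^{\Sigma}\subset G(\R)$, using transitivity of $\GL_2(\R)$ and positivity of the relevant determinants. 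You instead organize the proof around the equivariant sign map $\sigma$, note that its fibres are the connected components of $X_1$, show $X$ is a union of full components via transitivity of $\SL_2(\R)^{\Sigma}$, and compute $\sigma(X)$ from the surjectivity of $d\colon G(\R)\to R(\R)$; this is a clean and slightly more structural way to see the same facts. Two remarks. First, the surjectivity of $d$ on real points does not require Galois cohomology: since $G(\R)=\{g\in G_1(\R): \det g\in R(\R)\}$, the diagonal matrices $\diag(r,1)$, $r\in R(\R)$, already do the job (and in fact for step (i) you only need sign patterns in $\VZ$ to be realized, which these matrices give immediately). Second, your cohomological route is valid, but the parenthetical justification is off: simple connectedness does \emph{not} force $H^1(\R,\cdot)$ to vanish over $\R$ (e.g.\ the compact form of $\SL_2$ has nontrivial $H^1$); the vanishing $H^1(\R,\SL_2)=1$ for the split group follows instead from the exact sequence $1\to\SL_2\to\GL_2\to\G_m\to 1$, Hilbert 90, and the surjectivity of $\det$ on $\R$-points. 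Since that vanishing is true and standard, this is a flaw in the cited reason rather than in the conclusion.
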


\begin{proof} 
	Let us temporarily denote the set in \eqref{def:X-VZ} by $W$.
	We need to show that the $G(\R)$-orbit of $(i,\dots,i)\in (\C\setminus\R)^\Sigma$ is equal to $W$.
	By definition of $G$ in \eqref{diag:defG} and the usual formula for the imaginary part under M\"obius transformations in terms of the determinant, we see that $G(\R)\cdot (i,\dots,i)\subset W$.
	
	Conversely, for any $(z_x)_x\in W$ it is easy to find an element $(z'_x)_x\in G(\R)\cdot (i,\dots,i)$ with $\sgn\im z_x = \sgn \im z'_x$.
	Namely, by \eqref{def:X-VZ} there is an element $g=(g_x)_{x\in\Sigma}$ of $G(\R)\subset G_1(\R)=\GL_2(\R)^\Sigma$ satisfying $\sgn \det g_x = \sgn\im z_x$ for all $x\in\Sigma$, thus we may take $(z'_x)_{x\in\Sigma} = g\cdot (i,\dots,i)$.
	Moreover, $\GL_2(\R)$ acts transitively on $\C\setminus\R$, so there exists $g'=(g'_x)_x \in \GL_2(\R)^\Sigma$ with $g'_x \cdot z'_x = z_x$ for all $x\in\Sigma$.
	By looking at signs of imaginary parts, we must have $\det g'_x > 0$ for all $x\in\Sigma$, hence we may rescale $g'$ so that $g'\in \SL_2(\R)^\Sigma \subset G(\R)$.
	We conclude that $(z_x)_x\in G(\R)\cdot (i,\dots,i)$.
\end{proof}

By Lemma \ref{lem:Gad} we have $G^{\ad}=G_1^{\ad}$.
The axioms \cite[(2.1.1.1-3)]{del-vds} of being a Shimura datum really only depend on the adjoint group.
We already saw that $(G_1,X_1)$ satisfies these axioms, so we conclude that $(G,X)$ is a Shimura datum, too.
We call the associated Shimura variety $\Sh(G,X)$ a \emph{variant} of the Hilbert modular variety.

\begin{example}(PEL Hilbert modular variety)\label{exa:PELHMV}
	In the case $R=\G_m$, we have $G=G_0$, $\VZ=\{\pm (1,\dots,1)\}$ and so $X=X_0:=\mathfrak{h}^\Sigma \sqcup (-\mathfrak{h})^\Sigma$, where $\mathfrak{h}\subset\C$ denotes the upper half plane.
	The Shimura datum $(G_0,X_0)$ is of \emph{PEL type}, see \cite[Def.\ 8.15]{misv}, \cite[4.9]{del}.
	
	To be precise, it is associated to the type (C) PEL datum consisting of the simple $\Q$-algebra $F$, with trivial involution, acting on the $\Q$-vector space $V=F^2$ equipped with the alternating, $\Q$-bilinear, $F$-compatible form $\psi\colon V \times V \to \Q$ given by
	\[
	\psi\left(\begin{pmatrix} v_1 \\ v_2\end{pmatrix},\begin{pmatrix} w_1 \\ w_2\end{pmatrix}\right)=\Tr_{F/\Q}\circ\det\begin{pmatrix} v_1 & w_1 \\ v_2 & w_2\end{pmatrix}.
	\]
	
	We call the associated Shimura variety $\Sh(G_0,X_0)$ the \emph{PEL Hilbert modular variety}.
\end{example}

The Shimura datum $(G,X)$ depends on the choice of the intermediate torus $\G_m \hookrightarrow R \hookrightarrow R_{F/\Q}\G_m$.
We think of the family of Shimura varieties $\Sh(G,X)$ as interpolating between the Hilbert modular case ($R=R_{F/\Q}\G_m$) and the PEL Hilbert modular case ($R=\G_m$).
Our goal is to develop a plectic theory for all these variants of the Hilbert modular variety in order to bridge the gap between  the results of \cite{ne} and \cite{nescholl}.
We start by relating the complex points of the Shimura variety $\Sh(G,X)$ to isomorphism classes of abelian varieties equipped with extra structure.

First we look at quadruples $(A,i,s,\eta)$.
Here $A$ denotes a complex abelian variety of dimension $[F:\Q]$ equipped with real multiplication by $F$ via the ring homomorphism $i\colon F\hookrightarrow \End(A)\otimes_\Z\Q$.
Moreover, $s$ is a polarisation of $A$, which we think of as a Riemann form $s\colon H_1(A,\Q)\times H_1(A,\Q) \to \Q$.
We require that $s$ is $F$-compatible, meaning that $(f\cdot s)(u,v):=s(i(f)u,v)$ is equal to $s(u,i(f)v)$ for all $u,v\in H_1(A,\Q)$ and $f\in F$.
Finally, the level structure $\eta$ is an $\A_{F,f}$-module-isomorphism $\eta\colon V\otimes_\Q \A_f \xrightarrow{~\sim~} \widehat{V}(A)$.
Here $(V,\psi)$ are as in Example \ref{exa:PELHMV} and $\widehat{V}(A)=\widehat{T}(A)\otimes_\Z\Q$, where $\widehat{T}(A):=\varprojlim_n A[n]$ is the full Tate module of $A$.

More precisely, we are interested in quadruples $(A,i,R(\Q)s,\eta\overline{Z(\Q)})$, i.\,e.\ not $s$ (resp.\ $\eta$) itself is part of the datum, but only the class of all $R(\Q)$-multiples (resp.\ $\overline{Z(\Q)}$-translates) of $s$ (resp.\ $\eta$).
Additionally, we require that $\eta$ sends the class $R(\A_f)\psi$ to $R(\A_f)s$ and that there exists an $F$-linear isomorphism
\begin{align}\label{eq:doublestar}
a\colon H_1(A,\Q)\xrightarrow{~\sim~} V
\end{align}
that sends $R(\Q) s$ to $R(\Q) \psi$ and satisfies $a\circ h_A \circ a^{-1}\in X$. 
Here $h_A\colon \mathbb{S} \to \End(H_1(A,\R))$ denotes the Hodge structure on $H_1(A,\Q)$.

We call two quadruples $(A,i,R(\Q)s,\eta \overline{Z(\Q)})$ and $(A',i',R(\Q)s',\eta' \overline{Z(\Q)})$ isomorphic if there exists a quasi-isogeny $f\colon A \to A'$ that is $F$-linear (with respect to $i$ and $i'$), sends $R(\Q)s$ to $R(\Q)s'$ and satisfies $\eta' \overline{Z(\Q)}=f\circ \eta \overline{Z(\Q)}$.
We denote the set of isomorphism classes of such quadruples by $\mathcal{A}(\C)$.
We employ the strategy of \cite[\S 6]{misv} to identify $\mathcal{A}(\C)$ with the complex points of the Shimura variety $\Sh(G,X)$.
Namely, we define
\[
\alpha\colon \Sh(G,X)(\C) \to \mathcal{A}(\C)
\]
by mapping $[h,g]$ to the isomorphism class of $(A_h,i_0,R(\Q)\psi,g\overline{Z(\Q)})$, where $A_h$ is the abelian variety with Hodge structure $H_1(A_h,\Q)$ equal to $(V,h)$, the ring homomorphism $i_0 \colon F \to \End(A)\otimes_\Z\Q = \End(V,h)$ is the obvious one (note that $V=F^2$), the pairing $\psi\colon V\times V \to \Q$ is as above, and $g$ is viewed as the map $g\colon V\otimes_\Q\A_f \to V\otimes_\Q\A_f = \widehat{V}(A_h)$.

Conversely, we define
\[
\beta\colon \mathcal{A}(\C) \to \Sh(G,X)(\C)
\]
by mapping the isomorphism class of $(A,i,R(\Q)s,\eta\overline{Z(\Q)})$ to $[a\circ h_A \circ a^{-1},a\circ\eta]$, where $a\colon H_1(A,\Q) \xrightarrow{~\sim~} V$ is an isomorphism as in \eqref{eq:doublestar}.
It is straightforward to check that $\alpha$ and $\beta$ are well-defined and inverse to each other:

\begin{thm}\label{thm:moduliHMV-general}
	Let $\G_m \hookrightarrow R \hookrightarrow R_{F/\Q}\G_m$ be an intermediate algebraic torus over $\Q$ and $(G,X)$ be the associated Shimura datum as above.
	Then the maps $\alpha$ and $\beta$ are mutually inverse bijections between the complex points of the Shimura variety $\Sh(G,X)$ and the set $\mathcal{A}(\C)$.
\end{thm}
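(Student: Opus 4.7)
The plan is to adapt Milne's strategy from \cite[\S 6]{misv}, which identifies complex points of a PEL Shimura variety with isomorphism classes of abelian varieties equipped with extra structure, to the intermediate setting where polarisations are considered only up to $R(\Q)$-multiplication. This interpolates between the PEL case $R=\G_m$ (polarisation up to $\Q^\times$) and the non-PEL case $R=R_{F/\Q}\G_m$ (polarisation only up to $F^\times$, effectively forgotten); the required compatibility with $G$ is precisely what the Cartesian diagram \eqref{diag:defG} encodes.

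First I would verify that $\alpha$ is well-defined. For a representative $(h,g)\in X\times G(\A_f)$, I construct $A_h$ as a complex abelian variety (considered up to isogeny) whose rational homology carries the Hodge structure $h$ on $V$; the hypothesis $h\in X$ (Lemma \ref{lem:XVZ}) says that the sign pattern of imaginary parts lies in $\VZ$, which ensures that some element of $R(\Q)\cdot\psi$ is a Riemann form for $A_h$, so that $R(\Q)\psi$ is a bona fide polarisation class. The map $i_0$ supplies real multiplication and $g$ supplies the level structure. Independence of the chosen representative under the action of $G(\Q)$ and $\overline{Z(\Q)}$ follows because any $\gamma\in G(\Q)$, viewed as an $F$-linear automorphism of $V$, satisfies $\psi(\gamma v,\gamma w)=\Tr_{F/\Q}(\det(\gamma)\cdot\det(v|w))$ with $\det(\gamma)\in R(\Q)$ by Lemma \ref{lem:Gad}, so $\gamma$ preserves the class $R(\Q)\psi$ and furnishes an isomorphism of quadruples; the $\overline{Z(\Q)}$-ambiguity of $g$ is absorbed into the corresponding ambiguity of $\eta$.

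Next I would verify that $\beta$ is well-defined. The crux is that any two $F$-linear isomorphisms $a,a'\colon H_1(A,\Q)\xrightarrow{\sim}V$ satisfying \eqref{eq:doublestar} differ by an element of $G(\Q)$: indeed, $a'\circ a^{-1}\in\GL_F(V)(\Q)=G_1(\Q)$ must preserve the class $R(\Q)\psi$, i.\,e.\ scale $\psi$ by an element of $R(\Q)\subset F^\times$, and this scaling condition cuts out exactly $G(\Q)$ from $G_1(\Q)$ by the Cartesian definition \eqref{diag:defG}. Similarly, an isomorphism of quadruples via an $F$-linear quasi-isogeny $f\colon A\to A'$ translates into left multiplication of $(a\circ h_A\circ a^{-1}, a\circ\eta)$ by an element of $G(\Q)$. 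Finally, for $\beta\circ\alpha$ the canonical choice $a=\id_V$ (valid since $H_1(A_h,\Q)=V$ by construction) immediately recovers $[h,g]$, while for $\alpha\circ\beta$ the inverse $a^{-1}\colon V\to H_1(A,\Q)$ defines a quasi-isogeny $A_{a\circ h_A\circ a^{-1}}\to A$ matching $i_0$ with $i$, $R(\Q)\psi$ with $R(\Q)s$, and $(a\circ\eta)\overline{Z(\Q)}$ with $\eta\overline{Z(\Q)}$.

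The main obstacle I expect is the precise identification of the group of self-comparisons of $(V, F\text{-action}, R(\Q)\psi)$ with $G(\Q)$. This is exactly what the Cartesian square \eqref{diag:defG} is engineered to supply: the determinant map $d\colon G_1\to R_{F/\Q}\G_m$ translates the scaling of $\psi$ under $G_1(\Q)=\GL_F(V)(\Q)$ into the membership condition $d\in R$ defining $G$. Once this is in hand, the remainder is a routine transcription of Milne's PEL argument while keeping track of the coarser polarisation class.
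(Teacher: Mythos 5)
Your proposal is correct and follows essentially the same route as the paper, which likewise constructs $\alpha$ and $\beta$ following Milne's PEL strategy and then notes that well-definedness and mutual inverseness are straightforward checks; your key observation that preserving the class $R(\Q)\psi$ cuts out exactly $G(\Q)$ inside $G_1(\Q)$ via the Cartesian square \eqref{diag:defG} is precisely the point the paper relies on. The only step you assert without comment is that every sign pattern in $\VZ$ is realised by an element of $R(\Q)$ (so that $R(\Q)\psi$ contains an honest polarisation of $A_h$); this holds by real approximation for the torus $R$ and is at the same level of routine verification the paper itself leaves implicit.
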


\begin{remark}
	In the case $R=\G_m$, this is a special case of the description of the complex points of a PEL Shimura variety in terms of abelian varieties, see \cite[4.11]{del}.
	In the case $R=R_{F/\Q}\G_m$, Theorem \ref{thm:moduliHMV-general} is a special case of \cite[4.14]{del}. 
\end{remark}

\subsection{CM points}\label{sse:CMpts}

The reflex field of the Shimura datum $(G,X)$ is $\Q$. 
The canonical model, also denoted $\Sh(G,X)$, of the Shimura variety $\Sh(G,X)$ is a projective system of varieties $\Sh_U(G,X)$ over $\Q$ that are (for small enough compact open subgroups $U\subset G(\A_f)$) coarse moduli spaces for functors $\mathcal{A}$ modelled on the set $\mathcal{A}(\C)$ of Theorem \ref{thm:moduliHMV-general}.
To prove this, one follows the strategy of \cite[\S 2]{fred}, where the case of $\Sh_U(G_1,X_1)$ is done in detail.
We only give a sketch here for the models of $\Sh_{U(N)}(G,X)$ for the principal congruence subgroups
\[
U(N):=G(\A_f)\cap \left\{\left. g\in \GL_2(\widehat{\mathcal{O}}_F) ~\right|~g \equiv \begin{pmatrix} 1 & 0\\ 0 & 1 \end{pmatrix} \mo N\right\}
\]
with $N$ large enough.
For a non-zero fractional ideal $J$ of $F$, \cite[2.2.1]{fred} construct the fine moduli space 
$Y_{J,N}$ parametrising isomorphism classes of abelian varieties equipped with a $J$-polarisation and full level-$N$-structure.
Let $Z_{J,N}$ be the $\Q$-scheme representing $\O_F$-linear isomorphisms $J/NJ \xrightarrow{~\sim~}\mathfrak{d}^{-1}\otimes\mu_N$, where $\mathfrak{d}$ denotes the different of $F$.
There is a canonical map $Y_{J,N} \to Z_{J,N}$, which base changed to $\C$ is precisely the $\pi_0$-map.
Note that a choice of primitive $N$-th root of unity $\zeta_N$ gives an isomorphism
\begin{align}\label{iso:OFmodN}
\Isom_{\O_F}(J/NJ,\mathfrak{d}^{-1}\otimes\mu_N) \cong (\O_F/N\O_F)^\times.
\end{align}

Now define $R(\Q)_{>0} := R(\Q)\cap F^\times_{>0}$, $R(\Z)_{>0} := R(\Q)_{>0} \cap \mathcal{O}_F^\times$, $R(\widehat{\Z}) := R(\A_f)\cap\widehat{\mathcal{O}}_F^\times$, and $R(\Z/N\Z) := R(\widehat{\Z})/d(U(N))$. 
Note that $(\Z/N\Z)^\times \subset R(\Z/N\Z) = R(\A_f)\cap \widehat{\mathcal{O}}_F^\times / R(\A_f)\cap (1+N\widehat{\mathcal{O}}_F)^\times \subset (\O_F/N\O_F)^\times$.
We denote the preimage of $R(\Z/N\Z)$ under \eqref{iso:OFmodN} by $\Isom_{\O_F}^R \subset \Isom_{\O_F}(J/NJ,\mathfrak{d}^{-1}\otimes\mu_N)$; it is independent of the choice of $\zeta_N$.

For $\mathfrak{c}\in \Cl_R^+ := R(\A_f)/R(\Q)_{>0} R(\widehat{\Z})$, the ``narrow class group'' of $R$, let $[J_{\mathfrak{c}}] = [\mathfrak{d}^{-1}]\iota(\mathfrak{c})$, where $\iota\colon \Cl_R^+ \to \Cl_F^+$ denotes the natural map.
Let $Z_{\mathfrak{c},N}^R$ be the subscheme of $Z_{J_\mathfrak{c},N}$ representing those $\O_F$-linear isomorphisms that lie in $\Isom_{\O_F}^R$.
Define $Y^R_{\mathfrak{c},N}$ by the Cartesian diagram
\begin{equation*}
\begin{tikzcd}
Y^R_{\mathfrak{c},N} \arrow[r,hook] \arrow[d] & Y_{J_\mathfrak{c},N} \arrow[d] \\
Z^R_{\mathfrak{c},N} \arrow[r,hook]           & Z_{J_\mathfrak{c},N}.
\end{tikzcd}
\end{equation*}
Then $Y^R_{\mathfrak{c},N}$ is a fine moduli space parametrising HBAVs with a certain restriction on the interplay between polarisation and level structure.
Define the finite group $G^R_{U(N)} := R(\Z)_{>0} / R(\Z)_{>0}\cap ((1+N\O_F)^\times)^2$.
Then 
\[
\coprod_{\mathfrak{c}\in\Cl_R^+} Y_{\mathfrak{c},N}^R/G^R_{U(N)}
\]
is a model of $\Sh_{U(N)}(G,X)$ over $\Q$ that is also a coarse moduli space of HBAVs.
Looking at complex points again, one recovers the description of the set $\mathcal{A}_{U(N)}(\C)$ defined above (with $\eta U(N)$ being part of datum instead of $\eta \overline{Z(\Q)}$) by working in the category of abelian varieties ``up to isogeny'' instead.\footnote{It is curious that the adelic level structure somehow determines the abelian variety and its polarisation within its isogeny class. See \cite[\S 9]{buzz-moduli} for details. In what follows we always work with abelian varieties up to isogeny.}.


We will continue to work with the projective limit over all $U$.
For general $R$, as $\Sh(G,X)$ is a projective system of coarse moduli spaces, we can describe its $\overline{\Q}$-points $\Sh(G,X)(\overline{\Q})$ as the set $\mathcal{A}(\overline{\Q})$ of isomorphism classes of quadruples $(A,i,R(\Q)s,\eta \overline{Z(\Q)})$ as above, but with $A$ defined over $\overline{\Q}$.
The Galois group $\Gamma_\Q$ acts on an element of $\mathcal{A}(\overline{\Q})$ in the obvious way by conjugating the abelian variety and its extra structure.

On the special points of $\Sh(G,X)$, we can describe this action in more concrete terms. 
Here a point $[h,g]$ of $\Sh(G,X)(\C)$ is called \emph{special} if the Mumford--Tate group of $h$ is a torus.
By \cite[14.11]{misv}, this is the case if and only if the abelian variety $A$ of the associated quadruple $[A,i,R(\Q)s,\eta\overline{Z(\Q)}]$ under the bijection in Theorem \ref{thm:moduliHMV-general} has \emph{complex multiplication}. 
In particular, this implies that $A$ is defined over $\overline{\Q}$, hence the above quadruple defines a $\overline{\Q}$-point of $\Sh(G,X)$.

More precisely, the abelian variety $A$ having CM means that $i\colon F\to \End(A)\otimes_\Z\Q$ extends to a homomorphism $K \to \End(A)\otimes_\Z\Q$, where $K$ is a totally imaginary quadratic extension of $F$.
The field $K$ is a CM field and $\Gal(K/F)=\langle c \rangle$, where $c$ denotes complex conjugation (under any embedding of $K$ into $\C$). 
If $A$ has CM by $K$, then the pairing $s$ in the quadruple $[A,i,R(\Q)s,\eta\overline{Z(\Q)}]$ is automatically not only $F$-, but also $K$-compatible, i.\,e.\ we have
\[
s(i(k)u,v) = s(u,i(c(k))v), \quad \text{for all }u,v\in H_1(A,\Q), k\in K.
\]

Before proceeding, let us describe polarised CM abelian varieties $(A,i,s)$ more concretely as in \cite[Prop.\ 1.3]{mifundthm}.
Here $A$ denotes a complex abelian variety equipped with CM by $i\colon K \to \End(A)\otimes_\Z\Q$ and a $K$-compatible polarisation $s$.
Diagonalising the action of $K$ on the tangent space of $A$ at the origin yields an isomorphism of this tangent space with $\C^\Phi$, where $\Phi$ is a \emph{CM type} of $K$, i.\,e.\ $\Hom(K,\C)=\Phi \sqcup \overline{\Phi}$.
In other words, $\Phi$ contains precisely one embedding of every complex conjugate pair of embeddings of $K$ into $\C$.
Here $\C^\Phi:=\bigoplus_{\varphi\in\Phi} \C_\varphi$, where $\C_\varphi$ is a one-dimensional $\C$-vector space on which $K$ acts via $\varphi\colon K \hookrightarrow \C$.

We can then find an isomorphism $\xi\colon\C^\Phi/\Phi(\mathfrak{a}) \xrightarrow{~\sim~} A(\C)$ of complex Lie groups, where $\mathfrak{a}$ is a lattice in $K$ and (abusing notation) we also write $\Phi$ for the map $\Phi\colon K \to \C^\Phi$ sending $k$ to $(\varphi(k))_{\varphi\in\Phi}$.
Finally, the group $H_1(\C^\Phi/\Phi(\mathfrak{a}),\Q)$ is canonically isomorphic to $K$, so by \cite[(5.5.13)]{shi-aritheo} there exists a unique totally imaginary element $t\in K^\times$ satisfying $\im \varphi(t)>0$ for all $\varphi\in\Phi$ such that the pullback of $s$ via $\xi$ is equal to $E_t(u,v) := \Tr_{K/\Q}(tuc(v))$ for $u,v\in K$.

We call $(K,\Phi;\mathfrak{a},t)$ the \emph{type} of $(A,i,s)$.
It is determined up to changing $(\mathfrak{a},t)$ to $(\lambda\mathfrak{a},\frac{t}{\lambda c(\lambda)})$ with $\lambda\in K^\times$.
Thus an arbitrary CM point $[A,i,R(\Q)s,\eta\overline{Z(\Q)}]$ of $\Sh(G,X)$ can be written, via the isomorphism $\xi$ above, as
\begin{align}\label{eq:compCMpt}
[\C^\Phi/\Phi(\mathfrak{a}),i_\Phi,R(\Q)E_t,\eta'\overline{Z(\Q)}]
\end{align}
with $(K,\Phi;\mathfrak{a},t)$ as above, the endomorphism $i_\Phi(k)$, for $k\in K$, given as the reduction modulo $\Phi(\mathfrak{a})$ of multiplication by $\Phi(k)$ on $\C^\Phi$, and $\eta'=\xi^{-1}\circ\eta$. 

The key to describe the Galois conjugate of a CM point is \emph{Tate's half transfer}.
A published reference for the following ``Main Theorem of Complex Multiplication'' is \cite[Ch.\ 7, Thm 3.1]{la-cm}, but we usually follow the notation of \cite[\S 4]{mifundthm}. 
For a CM field $K\subset\overline{\Q}$ and a CM type $\Phi$ of $K$, Tate's half transfer is the map $F_\Phi\colon \Gamma_\Q \to \Gamma_K^{\ab}$ defined as follows:
fix coset representatives $w_\rho$ for the right $\Gamma_K$-cosets in $\Gamma_\Q$ satisfying $w_{c\rho}=c w_\rho$.
For $\gamma\in\Gamma_\Q$, define
\begin{align}\label{eq:Tate-halftr}
F_\Phi(\gamma) := \prod_{\varphi\in\Phi} \left.\left(w_{\gamma\varphi}^{-1}\gamma w_\varphi\right)\right|_{K^{\ab}}.
\end{align}
This definition is independent of the choice of coset representatives.
There is a natural lift of $F_\Phi$ under the Artin map called the \emph{Taniyama element} $f_\Phi\colon\Gamma_\Q \to \A_{K,f}^\times/K^\times$, constructed as follows.
Look at the commutative diagram with exact rows
\begin{equation}\label{diag:tate-rec}
\begin{tikzcd}
0 \arrow[r] & \ker(\rec_K) \arrow[r] \arrow[d, "1+c"] & {\A_{K,f}^\times/K^\times} \arrow[r, "\rec_K"] \arrow[d, "1+c"] & \Gamma_K^{\ab} \arrow[r] \arrow[d, "1+c"] & 0 \\
0 \arrow[r] & \ker(\rec_K) \arrow[r]                  & {\A_{K,f}^\times/K^\times} \arrow[r, "\rec_K"]                         & \Gamma_K^{\ab} \arrow[r]                  & 0.
\end{tikzcd}
\end{equation}
By \cite[Lemma 1]{tate}, $\ker(\rec_K)$ is uniquely divisible\footnote{This property also follows from the isomorphism $\ker(\rec_K)\cong \mathcal{O}_K^\times \otimes_\Z (\A_{\Q,f}/\Q)$ recalled in Section 3.2.} and complex conjugation $c$ acts trivially on it, so the left vertical arrow is an isomorphism.
By an easy diagram chase, this means that the right hand square is Cartesian.

Using the properties of the half transfer, one sees that ${}^{1+c}\! F_\Phi(\gamma) =V_{K/\Q}(\gamma)=\rec_K(\chi_{\cyc}(\gamma))$, where $V_{K/\Q}\colon \Gamma_\Q^{\ab} \to \Gamma_K^{\ab}$ denotes the transfer map (hence the name \emph{half} transfer) and $\chi_{\cyc}\colon \Gamma_{\Q} \to \hat{\Z}^\times$ denotes the cyclotomic character.
Since the right hand square in \eqref{diag:tate-rec} is Cartesian, there exists a unique $f_\Phi(\gamma)\in\A_{K,f}^\times/K^\times$ such that $\rec_K(f_\Phi(\gamma))=F_\Phi(\gamma)$ and ${}^{1+c}\!f_\Phi(\gamma)=\chi_{\cyc}(\gamma) K^\times$.

The Main Theorem of Complex Multiplication \cite[Thm 4.1]{mifundthm} now directly implies the following description of the Galois conjugate of a CM point of $\Sh(G,X)$ in terms of the Taniyama element.

\begin{thm}\label{thm:conjCMpt}
	As in \eqref{eq:compCMpt}, let $\mathcal{P}=[\C^\Phi/\Phi(\mathfrak{a}),i_\Phi,R(\Q)E_t,\eta\overline{Z(\Q)}]$ be a CM point of $\Sh(G,X)$.
	Let $\gamma\in\Gamma_\Q$ and take $f\in \A_{K,f}^\times$ such that $f_\Phi(\gamma)=fK^\times$.
	Let $\chi=\frac{\chi_{\cyc}(\gamma)}{f\cdot\overline{f}}\in F^\times$.
	Then the $\gamma$-conjugate of $\mathcal{P}$ is equal to
	\[
	[\C^{\gamma\Phi}/\gamma\Phi(f\mathfrak{a}),i_{\gamma\Phi},R(\Q) E_{\chi t},f \circ\eta\overline{Z(\Q)}].
	\]
\end{thm}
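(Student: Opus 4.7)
The plan is to apply the Main Theorem of Complex Multiplication \cite[Thm 4.1]{mifundthm} to the underlying polarised CM abelian variety with level structure, and then to translate its output into the quadruple notation of Theorem \ref{thm:moduliHMV-general}.

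First, I would observe that under the bijection of Theorem \ref{thm:moduliHMV-general}, the Galois action on $\mathcal{P}$ is induced by the obvious action on quadruples $(A,i,s,\eta)$: pass to representatives, conjugate each component, and only at the end project back to the class. It therefore suffices to work with a fixed representative of $\mathcal{P}$, with the passage to $R(\Q)$-polarisation classes and $\overline{Z(\Q)}$-level classes entering only at the very end.

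Next, I would invoke the Main Theorem of CM for the representative $(\C^\Phi/\Phi(\mathfrak{a}), i_\Phi, E_t, \eta)$: it exhibits a $K$-linear quasi-isogeny from ${}^\gamma(\C^\Phi/\Phi(\mathfrak{a}))$ to $\C^{\gamma\Phi}/\gamma\Phi(f\mathfrak{a})$ which sends $i_\Phi$ to $i_{\gamma\Phi}$, transports the polarisation $E_t$ to $E_{\chi t}$ with $\chi=\chi_{\cyc}(\gamma)/(f\cdot\overline{f})$, and transports the conjugated level structure ${}^\gamma\eta$ to $f\circ\eta$, where $f$ acts on $V\otimes\A_f\cong \widehat{V}(\C^\Phi/\Phi(\mathfrak{a}))$ via the $K\otimes\A_f$-module structure induced by $i_\Phi$. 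The element $\chi$ lies in $F^\times$ because the defining property ${}^{1+c}f_\Phi(\gamma)=\chi_{\cyc}(\gamma)K^\times$ of the Taniyama element (from the Cartesian square \eqref{diag:tate-rec}) yields $f\cdot c(f)\equiv \chi_{\cyc}(\gamma)\bmod K^\times$, so $\chi\in K^\times$ is fixed by complex conjugation and hence lies in the fixed field $F$.

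Finally, I would pass back to the equivalence classes: the $R(\Q)$-polarisation class is respected because $E_{\chi t}$ is a legitimate polarisation (the factor $\chi\in F^\times\subset R(\Q)$ at worst changes the representative within its $R(\Q)$-class), and the $\overline{Z(\Q)}$-level class is respected tautologically. The quadruple for ${}^\gamma\mathcal{P}$ then reads exactly as claimed.

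The main obstacle here is purely bookkeeping: matching the convention of \cite{mifundthm} for the action of $\A_{K,f}^\times$ on $\widehat{V}(A)$ to obtain $f\circ\eta$ (rather than $f^{-1}\circ\eta$) and verifying that the right-hand side is independent of the ambiguity $(\mathfrak{a},t)\mapsto (\lambda\mathfrak{a}, t/(\lambda c(\lambda)))$ in the type of $(A,i,s)$. Both are routine once the Main Theorem of CM is invoked in the form stated in \cite[\S 4]{mifundthm}.
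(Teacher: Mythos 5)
Your proposal takes essentially the same route as the paper: the paper gives no separate argument beyond observing that the Main Theorem of CM \cite[Thm 4.1]{mifundthm} (in the form describing the conjugate of a polarised CM abelian variety of type $(K,\Phi;\mathfrak{a},t)$ together with its adelic level structure) directly yields the stated quadruple, and your bookkeeping — including the argument that $\chi\in F^\times$ via ${}^{1+c}f_\Phi(\gamma)=\chi_{\cyc}(\gamma)K^\times$ and $c$-invariance, and the independence of the choice of $f$ up to the equivalence $(\mathfrak{a},t)\mapsto(\lambda\mathfrak{a},t/(\lambda c(\lambda)))$ — is exactly the translation that makes this precise. One small correction: the parenthetical inclusion $\chi\in F^\times\subset R(\Q)$ is backwards, since for general $R$ one only has $R(\Q)\subset F^\times$ (equality of the two classes $R(\Q)E_t$ and $R(\Q)E_{\chi t}$ holds only for $R=R_{F/\Q}\G_m$); but nothing in the proof needs this, because the theorem asserts the conjugate class to be $R(\Q)E_{\chi t}$, and one only needs that rescaling a representative by $r\in R(\Q)$ on the source rescales the image by the same $r$, so the assignment is well defined on $R(\Q)$-classes.
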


\begin{remark}
	For a lattice $\mathfrak{a}$ in $K$ and an idele $f\in\A_{K,f}^\times$, the lattice $f\mathfrak{a} \subset K$ is defined in \cite[Ch.\ 3.6, p.\ 77--78]{la-cm}.
	For example, if $\mathfrak{a}=\prod_{\mathfrak{p}} \mathfrak{p}^{a_{\mathfrak{p}}}$ is a fractional ideal of $K$, then $f\mathfrak{a}$ is given by the fractional ideal $\prod_{\mathfrak{p}} \mathfrak{p}^{a_{\mathfrak{p}}+v_{\mathfrak{p}}(f_{\mathfrak{p}})}$.
\end{remark}

\section{Plectic Galois group and action on CM points}\label{se:plecCM}

\subsection{Plectic Galois group and half transfer}\label{sse:plecgp}

\begin{defn}\label{def:plecgp}
	The \emph{plectic Galois group} is the group
	\[
	\plecgp := \plecgphash := \Aut_{\Set\text{-}\Gamma_F}(\Gamma_\Q)
	\]
	of right-$\Gamma_F$-equivariant bijections of $\Gamma_\Q$, i.\,e.\ of all bijections $\alpha\colon \Gamma_\Q \xrightarrow{~\sim~} \Gamma_\Q$ such that
	\[
	\alpha(\gamma\delta)=\alpha(\gamma)\delta \quad \text{for all } \gamma\in\Gamma_\Q, \delta\in\Gamma_F.
	\]
	The group $\plecgp$ depends on $F$; in the interest of readibility, the notation does not reflect this dependence.
	The absolute Galois group $\Gamma_\Q$ of $\Q$ embeds into $\plecgp$ by mapping $\gamma\in\Gamma_\Q$ to the map $[\gamma'\mapsto \gamma\gamma']\in\plecgp$ given by left translation by $\gamma$.
\end{defn}

There are several equivalent definitions of the plectic group.
Definition \ref{def:plecgp} is the one in \cite[\S 3]{nescholl}.
In the next two remarks we encounter two more versions of the plectic group.
For this we fix coset representatives $s_x$ for the right $\Gamma_F$-cosets in $\Gamma_\Q$, where $x\in\Gamma_\Q/\Gamma_F = \Sigma$.

\begin{remark}\label{rem:plecgpsemi}
	Let $S_\Sigma$ denote the symmetric group on the finite set $\Sigma$.
	Let $\Gamma_F^\Sigma$ denote the group of $\Sigma$-tuples $h=(h_x)_{x\in\Sigma}$ of elements of $\Gamma_F$, with the group structure given by pointwise composition.
	The group $S_\Sigma$ acts on $\Gamma_F^{\Sigma}$ by permuting the coordinates, and we can form the semi-direct product \[\plecgpsemi.\]
	
	Explicitly, the group operation is given by $(\pi,h)(\pi',h') := \left(\pi\pi',(h_{\pi'(x)} h'_x)_{x\in \Sigma}\right)$.
	The chosen coset representatives $s_x$ give a decomposition $\Gamma_\Q = \bigsqcup_{x\in\Sigma} s_x \Gamma_F$.
	By definition, $\alpha\in\plecgphash$ respects this decomposition, so for every $x\in\Sigma$ there exists $\pi(x)\in\Sigma$ such that $\alpha(s_x \Gamma_F)=s_{\pi(x)} \Gamma_F$.
	Moreover, the element $h_x=s_{\pi(x)}^{-1} \alpha(s_x)$ lies in $\Gamma_F$, and in this way we get an isomorphism $\rho_s\colon \plecgphash \xrightarrow{~\sim~} \plecgpsemi, \alpha \longmapsto (\pi,h)$.
	The isomorphism $\rho_s$ depends on the choice of coset representatives $s=(s_x)_{x\in\Sigma}$.
	However, the permutation $\pi$ of $\Sigma$ induced by $\alpha$ is independent of the choice of $s$ and will usually be denoted by $x\mapsto\alpha(x)$.
\end{remark}

\begin{remark}\label{rem:plecgpaut}
	In \cite{ne}, the plectic group is defined to be the group 
	\[
	\plecgpaut
	\]
	of $F$-algebra automorphisms of $F\otimes_\Q\overline{\Q}$.
	By \cite[(1.1.4)(iii)]{ne} or \cite[start of \S 2]{blake}, the choice of coset representatives $s=(s_x)_{x\in\Sigma}$ induces an isomorphism
	%
	$
	\beta_s\colon \plecgpaut \xrightarrow{~\sim~} \plecgpsemi.
	$
	The isomorphism $\beta_s$ depends on the choice of $s$.
	However, by \cite[Thm 2.5]{blake}, the composition $\rho_s^{-1}\circ \beta_s$ is independent of the choice of $s$ and hence gives a canonical isomorphism $\plecgpaut \xrightarrow{~\sim~} \plecgphash$.
\end{remark}

For more details on the plectic group, e.\,g.\ functoriality properties or details on the dependencies on $s$, we refer to \cite{ne,blake}.
For example, we will need the following lemma.

\begin{lem}\label{lem:prod-map}
	Let $(1,\product)$ denote the map
	$
	(1,\product)\colon \plecgpsemi \longrightarrow \Gamma_F^{\ab}
	$
	given by sending $(\pi,h)$ to $(1,\product)(\pi,h) :=\prod_{x\in\Sigma} h_x|_{F^{\ab}}.$
	Then the composition
	\[
	\Product:=(1,\product)\circ\rho_s\colon \plecgphash \longrightarrow \Gamma_F^{\ab}
	\]
	is independent of the choice of $s$.
	Moreover, restricted to the subgroup $\Gamma_\Q$ it is equal to the transfer map $V_{F/\Q}\colon \Gamma_\Q \to \Gamma_F^{\ab}$.
\end{lem}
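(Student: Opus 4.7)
The plan is to verify both assertions by direct computation with the coset representative formula for $\rho_s$, using crucially that we are working in the abelianisation $\Gamma_F^{\ab}$.

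\textbf{Independence of $s$.} I would compare two choices of coset representatives $s=(s_x)_{x\in\Sigma}$ and $s'=(s'_x)_{x\in\Sigma}$, so that $s'_x = s_x \delta_x$ for uniquely determined $\delta_x \in \Gamma_F$. Fix $\alpha \in \plecgp$ and let $\rho_s(\alpha) = (\pi,h)$, $\rho_{s'}(\alpha) = (\pi',h')$. First I would observe that $s_x \Gamma_F = s'_x \Gamma_F$, so the induced permutation of $\Sigma$ does not change, i.e.\ $\pi'=\pi$. Then from $h'_x = (s'_{\pi(x)})^{-1}\alpha(s'_x) = \delta_{\pi(x)}^{-1} s_{\pi(x)}^{-1} \alpha(s_x)\delta_x = \delta_{\pi(x)}^{-1} h_x \delta_x$, I compute the product in the abelian group $\Gamma_F^{\ab}$:
\[
\prod_{x\in\Sigma} h'_x\big|_{F^{\ab}} = \Bigl(\prod_{x\in\Sigma}\delta_{\pi(x)}^{-1}\Bigr) \Bigl(\prod_{x\in\Sigma} h_x\Bigr) \Bigl(\prod_{x\in\Sigma}\delta_x\Bigr)\bigg|_{F^{\ab}}.
\]
Since $\pi$ is a bijection of $\Sigma$, the two $\delta$-products cancel, giving $\prod_x h'_x = \prod_x h_x$ in $\Gamma_F^{\ab}$, as desired.

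\textbf{Identification with the transfer.} I would recall the standard formula for the transfer: for $\gamma \in \Gamma_\Q$ and coset representatives $s=(s_x)_{x\in\Sigma}$ of $\Gamma_\Q/\Gamma_F$, one has $\gamma s_x = s_{\pi(x)} h_x$ with $h_x \in \Gamma_F$, and then $V_{F/\Q}(\gamma) = \prod_{x\in\Sigma} h_x|_{F^{\ab}}$. On the other hand, under the embedding $\Gamma_\Q \hookrightarrow \plecgp$, the element $\gamma$ corresponds to left multiplication $\alpha_\gamma\colon \gamma'\mapsto \gamma\gamma'$. Unwinding the definition of $\rho_s$, one has $\alpha_\gamma(s_x) = \gamma s_x$, so the permutation induced by $\alpha_\gamma$ is precisely $\pi$ and the tuple is $(s_{\pi(x)}^{-1}\gamma s_x)_{x\in\Sigma} = (h_x)_{x\in\Sigma}$. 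Therefore $\Product(\gamma) = \prod_x h_x|_{F^{\ab}} = V_{F/\Q}(\gamma)$.

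Neither step is a serious obstacle; the only subtlety is making sure to pass to $\Gamma_F^{\ab}$ before rearranging the product, since the $\delta_x$'s and $h_x$'s do not commute in $\Gamma_F$ itself. Once that is noted, the argument reduces to the commutative-group identity $\prod_x \delta_{\pi(x)} = \prod_x \delta_x$, and the transfer identification is then essentially just a rereading of the definition of $V_{F/\Q}$ through the embedding $\Gamma_\Q \hookrightarrow \plecgp$.
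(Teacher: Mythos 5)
Your proof is correct and follows essentially the same route as the paper: the paper cites the change-of-section formula $\rho_{s'}(\alpha)=(1,t)^{-1}\rho_s(\alpha)(1,t)$ from the literature and notes the cancellation in $\Gamma_F^{\ab}$, whereas you derive that formula explicitly, and the identification with $V_{F/\Q}$ is in both cases just unwinding the classical coset-representative definition of the transfer through the embedding $\Gamma_\Q\hookrightarrow\plecgp$. Your remark about passing to the abelianisation before rearranging the product is exactly the right subtlety.
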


\begin{proof}
	Let $s'\colon\Sigma\to\Gamma_\Q$ be another section, so $s'_x = s_x t_x$ for some $t=(t_x)_{x\in\Sigma}\in\Gamma_F^{\Sigma}$.
	By \cite[p.\ 7]{blake}, we have $\rho_{s'}(\alpha) = (1,t)^{-1} \rho_s(\alpha) (1,t)$ for all $\alpha\in\plecgphash$, thus $(1,\product)(\rho_{s'}(\alpha))=(1,\product)(\rho_{s}(\alpha))$.
	
	Moreover, if $\alpha$ is given by left translation by $\gamma\in\Gamma_\Q$, then $\rho_s(\alpha)$ is equal to $(\pi,(s_{\gamma x}^{-1}\gamma s_x)_{x\in\Sigma})$ and hence $P(\alpha)=V_{F/\Q}(\gamma)$ by the definition of the transfer map $V_{F/\Q}$.
\end{proof}

We are aiming for an action of (a subgroup of) $\plecgp$ on the set of CM points of $\Sh(G,X)$, extending the action of $\Gamma_\Q$.
In view of Theorem \ref{thm:conjCMpt}, for this to work we need an action of $\plecgp$ on CM types and a plectic Taniyama element $f_\Phi\colon\plecgp \to \A_{K,f}^\times/K^\times$.
The former is easy to construct; the latter will be an appropriate lift of a plectic half transfer $F_\Phi\colon\plecgp \to \Gamma_K^{\ab}$.

\begin{remark}(Plectic action on CM types)\label{rem:plec-act-CMtypes}
	Let $K\subset\overline{\Q}$ be a totally imaginary quadratic extension of $F$.
	Then $\Gamma_K$ is a subgroup of $\Gamma_F$, so any $\alpha\in\plecgphash$ induces a permutation of $\Sigma_K:=\Hom(K,\overline{\Q})=\Gamma_\Q/\Gamma_K$, which we also denote by $\alpha$.
	In this way, given a CM type $\Phi\subset\Sigma_K$ of $K$, we define $\alpha\Phi:=\{\alpha(\varphi)~|~ \varphi\in\Phi\}$.
	
	More explicitly, let $s=(s_x)_{x\in\Sigma}$ be as above, and let $\varphi_x:=s_x|_K\in\Sigma_K$.
	Then $\Sigma_K = \{c^b \varphi_x ~|~ x\in\Sigma, b\in\Z/2\Z\}$.
	Let $\alpha\in\plecgphash$ and let $\rho_s(\alpha)=(\pi,h)\in\plecgpsemi$.
	For $h'\in\Gamma_F$, define $\overline{h}'\in\Z/2\Z$ by $h'|_K = c^{\overline{h}'}\in\Gal(K/F)=\langle c\rangle$.
	Then the action of $\alpha$ on $\Sigma_K$ is given by 
	\begin{align}\label{eq:plec-act-CMtypes}
	\alpha(c^b\varphi_x) = c^{b+\overline{h}_x}\varphi_{\pi(x)},\quad x\in\Sigma,b\in\Z/2\Z.
	\end{align}
\end{remark}

%

\begin{example}
	Using \eqref{eq:plec-act-CMtypes} it is straightforward to see that the plectic group acts transitively on the set of CM types of a given CM field $K$, usually in contrast to the action of $\Gamma_\Q$.
	To give an explicit example, consider a CM field $K$ of degree $6$ over $\Q$ with cyclic Galois group $G=\Gal(K/\Q)=\langle g \rangle$.
	It is not hard to see that the set of CM types decomposes into two $G$-orbits, namely of $\Psi=\{g^0,g^1,g^2\}$ and $\Phi=\{g^0,g^4,g^2\}$ of orders $6$ and $2$ respectively.
\end{example}

Now let $K$ be a CM field whose maximal totally real subfield is equal to $F$, and let $\Phi$ be a CM type of $K$.
By \cite[p.\ 8]{blake}, the definition of Tate's half transfer $F_\Phi\colon\Gamma_\Q\to\Gamma_K^{\ab}$ in \eqref{eq:Tate-halftr} extends to a \emph{plectic half transfer} $F_\Phi\colon \plecgp \to \Gamma_K^{\ab}$ as follows.
Again fix coset representatives $w_\rho$ for $\rho\in\Sigma_K=\Gamma_\Q/\Gamma_K$ satisfying $w_{c\rho} = c w_\rho$.
Let $\alpha\in\plecgp=\plecgphash$.
In Remark \ref{rem:plec-act-CMtypes} we saw that $\alpha$ induces a permutation of $\Sigma_K$, so for $\rho\in\Sigma_K$ the element $w^{-1}_{\alpha(\rho)} \alpha(w_\rho)$ lies in $\Gamma_K$ and we define
\begin{align}\label{eq:plec-halftr}
F_\Phi(\alpha) := \prod_{\varphi\in\Phi} \left.\left(w^{-1}_{\alpha(\varphi)} \alpha(w_\varphi)\right)\right|_{K^{\ab}}.
\end{align}
By \cite[Prop.\ 3.2]{blake}, this is independent of the choice of $w_\rho$.
Also, if $\alpha$ is given by left translation by $\gamma\in\Gamma_\Q$, we recover \eqref{eq:Tate-halftr}.

\begin{remark}
	The plectic half transfer was originally defined in \cite[(2.1.3)]{ne} (resp.\ \cite[(2.1.7)]{ne}) as a map with domain $\plecgpsemi$ (resp.\ $\plecgpaut$).
	Using Remarks \ref{rem:plecgpsemi} and \ref{rem:plecgpaut}, these definitions agree with the definition given here by \cite[Thm 3.4]{blake}.
\end{remark}


\begin{remark}
	For each $x\in\Sigma$, we define the complex conjugation corresponding to $x$ to be the element $c_x\in\Gamma_F^{\ab}$ defined as follows: if $s\colon\Sigma\to\Gamma_\Q$ is a section as before, then $s_x^{-1} c s_x$ is an element of $\Gamma_F$ and its image $c_x$ in $\Gamma_F^{\ab}$ is independent of the choice of $s$.
	
	Define the subgroup $\mathfrak{c}:=\langle c_x\colon x\in \Sigma \rangle \subset \Gamma_F^{\ab}$.
	By \cite[(1.3.1)]{ne}, the Artin map $\rec_F\colon \A_{F,f}^\times/F^\times_{>0} \to \Gamma_F^{\ab}$ induces a bijection
	\begin{align}\label{eq:cft-complexconj}
	\rec_F\colon F^\times/F^\times_{>0} \xrightarrow{~\sim~} \mathfrak{c}, \quad \alpha F^\times_{>0} \longmapsto \prod_{x\in\Sigma} c_x^{\alpha_x},
	\end{align}
	where the $\alpha_x\in\Z/2\Z$ are determined by $(-1)^{\alpha_x}=\sgn(x(\alpha))$ for each $x\in\Sigma$.
	Note that the group $F^\times/F^\times_{>0}$ is isomorphic to $\{\pm 1\}^{\Sigma}$ via $\alpha F^\times_{>0} \mapsto (\sgn(x(\alpha)))_{x\in\Sigma}$.
\end{remark}


\begin{lem}\label{lem:halftr-prop}
	\begin{enumerate}
		\item\label{item:halftr-cocyc} For $\alpha, \alpha'\in\plecgp$ we have $F_\Phi(\alpha\alpha') = F_{\alpha'\Phi}(\alpha) F_\Phi(\alpha')$.
		\item\label{item:halftr-Fab} For $\alpha\in\plecgp$, let $m_x\in\Z/2\Z$ be equal to $0$ if and only if the unique elements of $\Phi$ and $\alpha\Phi$ lying above $x$ are the same. Then
		\[
		\left. F_\Phi(\alpha)\right|_{F^{\ab}}=\Product(\alpha) \prod_{x\in\Sigma} c_x^{m_x}.
		\]
	\end{enumerate}
\end{lem}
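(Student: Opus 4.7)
My plan is to exploit two structural features throughout: elements $\alpha\in\plecgp$ are right-$\Gamma_F$-equivariant, and for each CM embedding $\varphi$ the element $w_{\alpha\varphi}^{-1}\alpha(w_\varphi)$ lies in $\Gamma_K\subset\Gamma_F$, because $\alpha$ automatically permutes $\Gamma_K$-cosets (Remark \ref{rem:plec-act-CMtypes}).

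For Part \ref{item:halftr-cocyc}, I would write
\[
\alpha'(w_\varphi) = w_{\alpha'\varphi}\cdot\bigl(w_{\alpha'\varphi}^{-1}\alpha'(w_\varphi)\bigr)
\]
and apply $\alpha$. Since the bracketed factor is in $\Gamma_F$, right-$\Gamma_F$-equivariance pulls it through $\alpha$, yielding
\[
(\alpha\alpha')(w_\varphi)=\alpha(w_{\alpha'\varphi})\cdot w_{\alpha'\varphi}^{-1}\alpha'(w_\varphi).
\]
Multiplying on the left by $w_{\alpha\alpha'\varphi}^{-1}=w_{\alpha(\alpha'\varphi)}^{-1}$, restricting to $K^{\ab}$, and taking the product over $\varphi\in\Phi$ decouples the expression. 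The second factors assemble into $F_\Phi(\alpha')$; re-indexing the first factors via the bijection $\varphi'=\alpha'\varphi$, which identifies $\Phi$ with $\alpha'\Phi$, produces $F_{\alpha'\Phi}(\alpha)$.

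For Part \ref{item:halftr-Fab}, fix a section $s\colon\Sigma\to\Gamma_\Q$, set $\varphi_x=s_x|_K$, and choose the lifts $w_{\varphi_x}:=s_x$, $w_{c\varphi_x}:=cs_x$. Write $\Phi=\{c^{b_x}\varphi_x:x\in\Sigma\}$ with $b_x\in\Z/2\Z$ and let $\rho_s(\alpha)=(\pi,h)$. Since $F$ is totally real, $\tilde c_x:=s_x^{-1}cs_x$ lies in $\Gamma_F$ with image $c_x$ in $\Gamma_F^{\ab}$, and $c^{b_x}s_x=s_x\tilde c_x^{b_x}$. Using right-$\Gamma_F$-equivariance and \eqref{eq:plec-act-CMtypes} (which gives $w_{\alpha(c^{b_x}\varphi_x)}=c^{b_x+\overline h_x}s_{\pi(x)}$), a direct computation yields
\[
w_{\alpha(c^{b_x}\varphi_x)}^{-1}\alpha(c^{b_x}s_x) = \tilde c_{\pi(x)}^{b_x+\overline h_x}\,h_x\,\tilde c_x^{b_x}.
\]
Multiplying over $x\in\Sigma$ and mapping to $\Gamma_F^{\ab}$ (where everything commutes), Lemma \ref{lem:prod-map} identifies $\prod_x h_x|_{F^{\ab}}$ with $\Product(\alpha)$, leaving $\prod_x c_{\pi(x)}^{b_x+\overline h_x}\cdot\prod_x c_x^{b_x}$. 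Re-indexing the first product by $y=\pi(x)$ collapses this to $\prod_x c_x^{b_x+b_{\pi^{-1}(x)}+\overline h_{\pi^{-1}(x)}}$. Finally, \eqref{eq:plec-act-CMtypes} shows that the unique element of $\alpha\Phi$ above $x$ is $c^{b_{\pi^{-1}(x)}+\overline h_{\pi^{-1}(x)}}\varphi_x$ while the unique element of $\Phi$ above $x$ is $c^{b_x}\varphi_x$, so the exponent reduces modulo $2$ to exactly $m_x$.

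The main obstacle is the bookkeeping in Part \ref{item:halftr-Fab}: tracking coset representatives for $\Sigma_K$ through $\alpha$, correctly pulling $\tilde c_x^{b_x}$ through $\alpha$ via right-$\Gamma_F$-equivariance, and matching the resulting $c_x$-exponents with the combinatorial quantity $m_x$. The conceptual input is modest --- essentially Lemma \ref{lem:prod-map} and the explicit plectic action on CM types --- but the signs and index substitutions demand care.
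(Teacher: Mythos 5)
Your proof is correct. The paper itself does not argue from the definitions: it simply cites Nekov\'a\v{r}'s properties of the half transfer ((2.1.4)(i) and (ii) of \cite{ne}), translated to the $\plecgphash$-model via the identifications in Remarks \ref{rem:plecgpsemi}--\ref{rem:plecgpaut} and \cite{blake}. You instead verify both statements directly from the coset-theoretic definition \eqref{eq:plec-halftr}, which in effect reproduces Nekov\'a\v{r}'s computation inside $\plecgphash$: for (\ref{item:halftr-cocyc}) the key step $(\alpha\alpha')(w_\varphi)=\alpha(w_{\alpha'\varphi})\cdot w_{\alpha'\varphi}^{-1}\alpha'(w_\varphi)$ is exactly right, since $w_{\alpha'\varphi}^{-1}\alpha'(w_\varphi)\in\Gamma_K\subset\Gamma_F$ and $\alpha$ is right-$\Gamma_F$-equivariant, and the re-indexing $\varphi\mapsto\alpha'\varphi$ (legitimate because the definition is independent of the choice of $w_\rho$) gives $F_{\alpha'\Phi}(\alpha)$. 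For (\ref{item:halftr-Fab}), your specific choice $w_{\varphi_x}=s_x$, $w_{c\varphi_x}=cs_x$ satisfies $w_{c\rho}=cw_\rho$, the local computation $w_{\alpha(c^{b_x}\varphi_x)}^{-1}\alpha(c^{b_x}s_x)=\tilde c_{\pi(x)}^{\,b_x+\overline h_x}h_x\tilde c_x^{\,b_x}$ checks out, and after passing to $\Gamma_F^{\ab}$ (where each factor lies, since it is in $\Gamma_K\subset\Gamma_F$, and using $c_x^2=1$) the exponent $b_x+b_{\pi^{-1}(x)}+\overline h_{\pi^{-1}(x)}$ is precisely $m_x$ by \eqref{eq:plec-act-CMtypes}. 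What each route buys: the paper's citation keeps the argument short and leaves the statement anchored in the choice-free model, while your computation is self-contained, makes the role of right-$\Gamma_F$-equivariance and of Lemma \ref{lem:prod-map} explicit, and shows transparently where the correction term $\prod_x c_x^{m_x}$ comes from.
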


\begin{proof}
	\eqref{item:halftr-cocyc} follows from \cite[(2.1.4)(i)]{ne} and \eqref{item:halftr-Fab} follows from \cite[(2.1.4)(ii)]{ne}.
	%
\end{proof}

\subsection{Plectic Taniyama element and action on CM points}\label{sse:plecTani}

Let $(K,\Phi)$ be as before.
We want to define a \emph{plectic Taniyama element} $f_{\Phi}\colon \plecgp \to \A_{K,f}^\times/K^\times$ such that the following two properties hold:
on the one hand, for $\alpha\in\plecgp$ we want $r_K(f_\Phi(\alpha))=F_\Phi(\alpha)$, and on the other hand, if $\alpha\in\plecgp$ is given by left translation by $\gamma\in\Gamma_\Q$ we want $f_\Phi(\alpha)$ to agree with the usual Taniyama element $f_\Phi(\gamma)$ defined using the Cartesian diagram \eqref{diag:tate-rec}.
We start by looking at the commutative diagram with exact rows
\begin{equation}\label{diag:tate-extended}
\begin{tikzcd}
0 \arrow[r] & \ker(\rec_K) \arrow[r] \arrow[d, "N_{K/F}"] & {\A_{K,f}^\times/K^\times} \arrow[r, "\rec_K"] \arrow[d, "N_{K/F}"] & \Gamma_K^{\ab} \arrow[r] \arrow[d, "\res"] & 0 \\
0 \arrow[r] & \ker(\rec_F) \arrow[r] \arrow[d, "i_{K/F}"] & {\A_{F,f}^\times/F^\times_{>0}} \arrow[r, "\rec_F"] \arrow[d, "i_{K/F}"] & \Gamma_F^{\ab} \arrow[r] \arrow[d, "V_{K/F}"] & 0 \\
0 \arrow[r] & \ker(\rec_K) \arrow[r]                  & {\A_{K,f}^\times/K^\times} \arrow[r, "\rec_K"]                         & \Gamma_K^{\ab} \arrow[r]                  & 0.
\end{tikzcd}
\end{equation}

Here $N_{K/F}$ denotes the norm map, $i_{K/F}$ is induced from the inclusion $F\subset K$, $\res(\gamma)=\gamma|_{F^{\ab}}$ is the restriction and $V_{K/F}$ the transfer map.
The vertical composites are equal to $1+c$, so that forgetting the middle row yields diagram \eqref{diag:tate-rec}.
By \cite[(1.2.2)]{ne}, we have $\ker(\rec_K)\cong \O_K^\times\otimes_\Z (\A_{\Q,f}/\Q)$ and $\ker(\rec_F)\cong \O_{F,>0}^\times\otimes_\Z (\A_{\Q,f}/\Q)$.
By Dirichlet's Unit Theorem \cite[Thm I.7.4]{neuk} the groups $\O_K^\times$ and $\O_{F,>0}^\times$ have the same $\Z$-rank.
Hence the maps $N_{K/F}\colon \O_K^{\times} \to \O_{F,>0}^\times$ and $i_{K/F}\colon \O_{F,>0}^\times \to \O_K^{\times}$ have finite kernel and cokernel, and since $\A_{\Q,f}/\Q$ is a $\Q$-vector space, we conclude that both left vertical arrows $N_{K/F}\colon \ker(\rec_K) \to \ker(\rec_F)$ and $i_{K/F}\colon \ker(\rec_F) \to \ker(\rec_K)$ are in fact isomorphisms.
By the same diagram chase as in \eqref{diag:tate-rec} this means that both right hand squares in \eqref{diag:tate-extended} are Cartesian.

Let us focus on the short exact sequence in the middle, i.\,e.\
\begin{equation*}
\begin{tikzcd}
0 \arrow[r] & \ker(\rec_F) \arrow[r, "\kappa_F"] & {\A_{F,f}^\times/F^\times_{>0}} \arrow[r, "\rec_F"] & \Gamma_F^{\ab} \arrow[r] & 0.
\end{tikzcd}
\end{equation*}
As mentioned before, the kernel $\ker(\rec_F)$ is uniquely divisible, hence in particular an injective object in the category of abelian groups.
This implies the existence of a map $\omega_F\colon \A_{F,f}^\times/F^\times_{>0} \to \ker(\rec_F)$ such that $\omega_F \circ \kappa_F=\id$.
Thus the short exact sequence splits, and by the Splitting Lemma \cite[p.\ 147]{hatcher} this is equivalent to the existence of a homomorphism $\chi_F\colon \Gamma_F^{\ab} \to \A_{F,f}^\times/F^\times_{>0}$, called a \emph{splitting} of $\rec_F$, satisfying $\rec_F\circ\chi_F=\id$.
The two maps $\omega_F$ and $\chi_F$ are related by $\ker(\omega_F)=\image(\chi_F)$.

\begin{lem}\label{lem:extra-split}
	\begin{enumerate}
		\item\label{item:extra-split-cc} Restricted to the subgroup $\mathfrak{c}\subset \Gamma_F^{\ab}$, any splitting $\chi_F$ is an inverse to the isomorphism in \eqref{eq:cft-complexconj}.
		
		\item\label{item:extra-split-cycl} We can choose the splitting $\chi_F$ so that the following diagram commutes
		\begin{equation*}
		\begin{tikzcd}
		\hat{\Z}^\times \arrow[d, "i_{F/\Q}"'] & \Gamma_\Q^{\ab} \arrow[l, "\chi_{\cyc}"', "\sim"] \arrow[d, "V_{F/\Q}"] \\
		{\A_{F,f}^\times/F^\times_{>0}}        & \Gamma_F^{\ab}. \arrow[l, "\chi_F"']                            
		\end{tikzcd}
		\end{equation*}
	\end{enumerate}
\end{lem}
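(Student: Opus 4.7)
For part (1), the subgroup $\mathfrak{c}\subset \Gamma_F^{\ab}$ is elementary $2$-abelian (isomorphic to $\{\pm 1\}^\Sigma$ via \eqref{eq:cft-complexconj}), whereas $\ker(\rec_F)$ is uniquely divisible by \cite[Lemma 1]{tate} and hence torsion-free. The composition $\mathfrak{c}\xrightarrow{\sim}F^\times/F^\times_{>0}\hookrightarrow\A_{F,f}^\times/F^\times_{>0}$ inverse to \eqref{eq:cft-complexconj} is manifestly a section of $\rec_F|_{\mathfrak{c}}$. For any splitting $\chi_F$ the quotient of $\chi_F|_{\mathfrak{c}}$ by this canonical section is a homomorphism from the $2$-torsion group $\mathfrak{c}$ into the torsion-free group $\ker(\rec_F)$, and is therefore trivial. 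This proves (1).

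For part (2), the plan is to force $\chi_F$ on the subgroup $H:=\image(V_{F/\Q})\subset\Gamma_F^{\ab}$ by the only possible formula
\[
\chi_H(V_{F/\Q}(\gamma)):=i_{F/\Q}(\chi_{\cyc}(\gamma)),
\]
and then extend to all of $\Gamma_F^{\ab}$. Assuming $V_{F/\Q}$ is injective, the formula is well-defined, and the CFT-compatibility square of Section \ref{sse:not} applied to $F/\Q$, together with the identity $\rec_\Q\circ\chi_{\cyc}=\id_{\Gamma_\Q^{\ab}}$ (which follows from the geometric-Frobenius normalisation of $\rec$ fixed in Section \ref{sse:not}), yields
\[
\rec_F\circ i_{F/\Q}\circ\chi_{\cyc}=V_{F/\Q}\circ\rec_\Q\circ\chi_{\cyc}=V_{F/\Q}.
\]
In particular $\chi_H$ is a section of $\rec_F|_H$.

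The heart of the argument is the injectivity of $V_{F/\Q}$. By the displayed identity, if $V_{F/\Q}(\gamma)=0$ then $i_{F/\Q}(\chi_{\cyc}(\gamma))\in\ker(\rec_F)$, so it suffices to show $i_{F/\Q}(u)\in\ker(\rec_F)$ forces $u=1$ for $u\in\hat{\Z}^\times$. Using $\Gamma_F^{\ab}\cong\pi_0(C_F)=(\{\pm 1\}^\Sigma\times\A_{F,f}^\times)/F^\times$, the class of $i_{F/\Q}(u)$ is represented by $((1,\dots,1),u)$, and its vanishing is equivalent to the existence of $\alpha\in F^\times_{>0}$ with $\alpha=u$ in $\A_{F,f}^\times$. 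Any such $\alpha$ lies in $F^\times_{>0}\cap\hat{\Z}^\times$; but choosing an integral basis of $\O_F$ starting with $1$ gives a decomposition $\O_F=\Z\oplus(\O_F/\Z)$ of $\Z$-modules, whence $\O_F\cap\hat{\Z}=\Z$ inside $\hat{\O}_F$, so $\O_F^\times\cap\hat{\Z}^\times=\Z^\times=\{\pm 1\}$ and finally $F^\times_{>0}\cap\hat{\Z}^\times=\{1\}$. Thus $u=1$, as required.

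To finish, fix any splitting $\chi_F^0$ of $\rec_F$ (which exists by the discussion preceding the lemma). The quotient $\chi_H\cdot(\chi_F^0|_H)^{-1}\colon H\to\ker(\rec_F)$ takes values in a divisible, hence injective, abelian group and therefore extends to a homomorphism $\psi\colon\Gamma_F^{\ab}\to\ker(\rec_F)$. Setting $\chi_F:=\chi_F^0\cdot\psi$ yields a splitting of $\rec_F$ with $\chi_F|_H=\chi_H$, so that $\chi_F\circ V_{F/\Q}=i_{F/\Q}\circ\chi_{\cyc}$, as required. The main obstacle is the injectivity of $V_{F/\Q}$; once this is in hand, everything else is standard manipulation with divisible abelian groups.
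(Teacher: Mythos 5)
Your part (1) is correct and is essentially the paper's own argument: the pointwise quotient of $\chi_F|_{\mathfrak{c}}$ by the canonical section is a homomorphism from a torsion group into the uniquely divisible, hence torsion-free, group $\ker(\rec_F)$, so it vanishes.

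Part (2), however, has a genuine gap at exactly the step you call the heart of the argument. Your proof of the injectivity of $V_{F/\Q}$ rests on the identification $\Gamma_F^{\ab}\cong\pi_0(C_F)=(\{\pm 1\}^\Sigma\times\A_{F,f}^\times)/F^\times$. That double quotient is $\A_{F,f}^\times/F^\times_{>0}$, which is \emph{not} $\pi_0(C_F)$: the connected component of $C_F$ is the \emph{closure} of the image of $(\R_{>0})^\Sigma$, and for $F\neq\Q$ this closure is strictly larger than the image, the discrepancy being precisely $\ker(\rec_F)\cong\O_{F,>0}^\times\otimes_\Z(\A_{\Q,f}/\Q)$ --- the nontrivial group whose presence is the very reason a splitting $\chi_F$ has to be chosen at all (if your identification were correct, $\rec_F$ would be an isomorphism and the lemma would be vacuous). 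Consequently, vanishing of the class of $i_{F/\Q}(u)$ in $\Gamma_F^{\ab}$ is not equivalent to $u\in F^\times_{>0}$; it is equivalent to $u$ lying in the closure $\overline{F^\times_{>0}}=F^\times_{>0}\cdot\overline{\O_{F,>0}^\times}$ inside $\A_{F,f}^\times$, so what must be excluded is a nontrivial $u\in\hat{\Z}^\times$ that is a \emph{limit of totally positive units} of $F$. Your integral-basis computation only shows $\hat{\Z}^\times\cap F^\times_{>0}=\{1\}$ and says nothing about such limits, so the injectivity of $V_{F/\Q}$ --- equivalently $\image(i_{F/\Q})\cap\ker(\rec_F)=0$ --- is not established by your argument. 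The paper does not reprove this fact either: it takes the injectivity of $V_{F/\Q}$ as a known input (the hooked arrow in diagram \eqref{diag:cft-transfer-Zhat}) and then arranges $\omega_F$ to factor through $\A_{F,f}^\times/i_{F/\Q}(\hat{\Z}^\times)F^\times_{>0}$. Your remaining construction --- defining $\chi_H$ on $\image(V_{F/\Q})$ and extending $\chi_H\cdot(\chi_F^0|_H)^{-1}$ through the injective (uniquely divisible) group $\ker(\rec_F)$ --- is correct and runs parallel to the paper's use of $\omega_F$; what is missing is the arithmetic input, which needs either a citation or a genuine argument controlling $\hat{\Z}^\times\cap\overline{\O_{F,>0}^\times}$.
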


\begin{proof}
	For \eqref{item:extra-split-cc} let $\omega_F$ be as above.
	Since $\omega_F$ restricted to $F^\times/F^\times_{>0}$ is a group homomorphism with domain a finite group and target a uniquely divisible group, this restriction must be trivial.
	Now let $c'\in\mathfrak{c}$.
	We need to show that $\chi_F(c')=\xi(c')$, where $\xi\colon \mathfrak{c} \to F^\times/F^\times_{>0}$ denotes the inverse of $r_F$ in \eqref{eq:cft-complexconj}.
	We have $\xi(c')\in F^\times/F^\times_{>0}\subset \ker(\omega_F)=\image(\chi_F)$, thus there exists $\delta\in\Gamma_F^{\ab}$ with $\chi_F(\delta)=\xi(c')$.
	Using that $\xi$ is inverse to $r_F$, the choice of $\delta$, and the equality $r_F \chi_F= \id$, we conclude that
	\[
	\chi_F(c')=\chi_F r_F \xi(c') = \chi_F r_F \chi_F(\delta) = \chi_F(\delta) = \xi(c').
	\]

	For \eqref{item:extra-split-cycl} look at the commutative diagram
	\begin{equation}\label{diag:cft-transfer-Zhat}
	\begin{tikzcd}
	& & \hat{\Z}^\times \arrow[r, "\sim", "\rec_{\Q}"'] \arrow[d, hook, "i_{F/\Q}"] & \Gamma_\Q^{\ab} \arrow[d, hook, "V_{F/\Q}"] & \\
	0 \arrow[r] & \ker(\rec_F) \arrow[r, "\kappa_F"] & {\A_{F,f}^\times/F^\times_{>0}} \arrow[r, "\rec_F"] & \Gamma_F^{\ab} \arrow[r] & 0.
	\end{tikzcd}
	\end{equation}
	It shows that $\image(i_{F/\Q})\cap\ker(\rec_F) = 0$, hence the map \[\ker(\rec_F) \xrightarrow{\kappa_F} \A_{F,f}^\times/F^\times_{>0} \longrightarrow \A_{F,f}^\times/i_{F/\Q}(\hat{\Z}^\times) F^\times_{>0}\] is injective.
	We may therefore choose $\omega_F\colon \A_{F,f}^\times/F^\times_{>0} \to \ker(\rec_F)$ to factor through $\A_{F,f}^\times/i_{F/\Q}(\hat{\Z}^\times) F^\times_{>0}$.
	We claim that any splitting $\chi_F$ of $r_F$ with $\image(\chi_F) = \ker(\omega_F)$ (for this choice of $\omega_F$) makes the diagram in \eqref{item:extra-split-cycl} commute.
	To see this, let $\gamma\in\Gamma_{\Q}^{\ab}$ and let $z:=\chi_{\cyc}(\gamma)\in\hat{\Z}^\times$, so that $r_\Q(z)=\gamma$.
	Since $i_{F/\Q}(z)\in\ker(\omega_F)=\image(\chi_F)$, there exists an element $\delta\in\Gamma_F^{\ab}$ such that $\chi_F(\delta)=i_{F/\Q}(z)$.
	Using the definition of $z$ and $\delta$ as well as \eqref{diag:cft-transfer-Zhat} and $r_F\chi_F = \id$ yields
	\[
	\chi_F V_{F/\Q}(\gamma) = \chi_F V_{F/\Q} r_\Q(z) = \chi_F r_F i_{F/\Q}(z) = \chi_F r_F \chi_F(\delta) = \chi_F(\delta) = i_{F/\Q}(z),
	\]
	which equals $i_{F/\Q}\chi_{\cyc}(\gamma)$, i.e. \eqref{item:extra-split-cycl} commutes.
\end{proof}

From now on, fix a splitting $\chi_F$ satisfying the additional property \eqref{item:extra-split-cycl} in Lemma \ref{lem:extra-split}.

\begin{defn}\label{def:plecTanielt}
	Let $\alpha\in\plecgp$.
	By the top right Cartesian square in \eqref{diag:tate-extended}, there exists a unique element $f_\Phi(\alpha)\in\A_{K,f}^\times/K^\times$ such that $\rec_K(f_\Phi(\alpha))=F_\Phi(\alpha)$ and $N_{K/F}(f_\Phi(\alpha))=\chi_F(F_\Phi(\alpha)|_{F^{\ab}})$.
	We call the map
	\[
	f_\Phi\colon \plecgp \to \A_{K,f}^\times/K^\times
	\]
	the \emph{plectic Taniyama element}.
\end{defn}

\begin{remark}
	The definition of $f_\Phi$ depends on the choice of splitting $\chi_F$.
\end{remark}

\begin{lem}\label{lem:sameTanielt}
	Let $\gamma\in\Gamma_\Q$ and let $\alpha\in\plecgphash$ be given by left translation by $\gamma$.
	Then
	\[
	f_\Phi(\gamma) = f_\Phi(\alpha).
	\]
\end{lem}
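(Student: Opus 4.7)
The plan is to use the uniqueness in Definition \ref{def:plecTanielt}, which characterises $f_\Phi(\alpha)$ via the top right Cartesian square in \eqref{diag:tate-extended}. Since left translation by $\gamma$ extends Tate's half transfer, $F_\Phi(\alpha) = F_\Phi(\gamma)$, and the defining conditions of $f_\Phi(\alpha)$ become
\[
\rec_K(f_\Phi(\alpha)) = F_\Phi(\gamma) \quad \text{and} \quad N_{K/F}(f_\Phi(\alpha)) = \chi_F(F_\Phi(\gamma)|_{F^{\ab}}).
\]
The classical element $f_\Phi(\gamma)$ already satisfies the first by construction, so by uniqueness it suffices to verify the second, i.e.\ $N_{K/F}(f_\Phi(\gamma)) = \chi_F(F_\Phi(\gamma)|_{F^{\ab}})$.

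To prove this identity, I will show that the ratio $\delta$ of the two sides (both in $\A_{F,f}^\times/F^\times_{>0}$) is trivial by squeezing it into an intersection of two subgroups which meet only at the identity. On one hand, applying $\rec_F$ to each side gives $F_\Phi(\gamma)|_{F^{\ab}}$: on the left by commutativity of the top right square of \eqref{diag:tate-extended}, on the right because $\chi_F$ is a splitting of $\rec_F$; hence $\delta \in \ker(\rec_F)$. On the other hand, applying $i_{K/F}$ to the left yields $i_{K/F}N_{K/F}(f_\Phi(\gamma)) = (1+c) f_\Phi(\gamma) = \chi_{\cyc}(\gamma) K^\times$ by the classical Cartesian-square definition of $f_\Phi(\gamma)$. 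For the right, I will expand $F_\Phi(\gamma)|_{F^{\ab}} = V_{F/\Q}(\gamma) \cdot \prod_x c_x^{m_x}$ using Lemma \ref{lem:halftr-prop}\eqref{item:halftr-Fab}, and then invoke both parts of Lemma \ref{lem:extra-split}: part \eqref{item:extra-split-cycl} converts $\chi_F(V_{F/\Q}(\gamma))$ into $i_{F/\Q}(\chi_{\cyc}(\gamma))$, whose image in $\A_{K,f}^\times/K^\times$ is again $\chi_{\cyc}(\gamma) K^\times$, while part \eqref{item:extra-split-cc} identifies $\chi_F(c_x)$ with an element of $F^\times/F^\times_{>0}$, which becomes trivial after $i_{K/F}$ because $F^\times \subset K^\times$. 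Both sides therefore agree after $i_{K/F}$, so $\delta$ lies in $\ker(i_{K/F}\colon \A_{F,f}^\times/F^\times_{>0} \to \A_{K,f}^\times/K^\times)$.

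The final step identifies this kernel as $F^\times/F^\times_{>0}$: a class $\beta F^\times_{>0}$ lies in the kernel precisely when $\beta \in K^\times$ inside $\A_{K,f}^\times$, and $K^\times \cap \A_{F,f}^\times = F^\times$ (an element of $K$ lying locally in every $F_v$ must be Galois-invariant, hence in $F$). Therefore $\delta \in \ker(\rec_F) \cap F^\times/F^\times_{>0}$, which vanishes because $\rec_F$ restricts to the injective isomorphism $F^\times/F^\times_{>0} \xrightarrow{\sim} \mathfrak{c}$ of \eqref{eq:cft-complexconj}. Hence $\delta = 1$ and $f_\Phi(\gamma) = f_\Phi(\alpha)$. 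The main conceptual obstacle is that the $N_{K/F}$-condition in the plectic definition is strictly stronger than the $(1+c)$-condition in the classical definition (since $i_{K/F}$ has non-trivial kernel $F^\times/F^\times_{>0}$), so one must show that the additional rigidity imposed by the splitting $\chi_F$ is exactly what the classical element already satisfies; the compatibility property \eqref{item:extra-split-cycl} of Lemma \ref{lem:extra-split} is tailor-made for this purpose.
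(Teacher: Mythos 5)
Your argument is correct, and its computational core (expanding $\chi_F\bigl(F_\Phi(\gamma)|_{F^{\ab}}\bigr)$ via Lemma \ref{lem:halftr-prop}\eqref{item:halftr-Fab}, Lemma \ref{lem:prod-map} and both parts of Lemma \ref{lem:extra-split}, landing on $\chi_{\cyc}(\gamma)$ up to an element of $F^\times$) is exactly the computation in the paper. The structural difference is the direction of the uniqueness argument: the paper uses the uniqueness of the \emph{classical} element in the Cartesian square \eqref{diag:tate-rec}, so it only has to compute ${}^{1+c}\!f_\Phi(\alpha)=i_{K/F}N_{K/F}(f_\Phi(\alpha))$, and since $i_{K/F}\circ N_{K/F}=1+c$ the whole calculation takes place in $\A_{K,f}^\times/K^\times$, where the $F^\times$-ambiguity coming from the $c_x$'s disappears for free. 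You instead use the uniqueness of the \emph{plectic} element in \eqref{diag:tate-extended}, so you must verify the strictly finer condition $N_{K/F}(f_\Phi(\gamma))=\chi_F\bigl(F_\Phi(\gamma)|_{F^{\ab}}\bigr)$ at the level of $\A_{F,f}^\times/F^\times_{>0}$, and your ``squeeze'' of the discrepancy $\delta$ into $\ker(\rec_F)\cap\ker(i_{K/F})$ is a valid way to do this: $\ker(i_{K/F})=F^\times/F^\times_{>0}$ is correct, and \eqref{eq:cft-complexconj} indeed gives $\ker(\rec_F)\cap F^\times/F^\times_{>0}=1$. A small simplification available to you: the paper has already shown (in the discussion after \eqref{diag:tate-extended}) that $i_{K/F}\colon\ker(\rec_F)\to\ker(\rec_K)$ is an isomorphism, so once $\delta\in\ker(\rec_F)$ and $i_{K/F}(\delta)=1$ you may conclude $\delta=1$ directly, without identifying $\ker(i_{K/F})$ on all of $\A_{F,f}^\times/F^\times_{>0}$ and invoking $K^\times\cap\A_{F,f}^\times=F^\times$. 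So: same key lemmas and same computation, with the verification run in the opposite direction at the cost of one extra (easy) injectivity step; the paper's direction is slightly leaner, yours has the mild merit of showing explicitly that the classical Taniyama element already satisfies the stronger norm normalisation imposed by $\chi_F$.
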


\begin{proof}
	By Definition \eqref{eq:plec-halftr}, we have $F_\Phi(\gamma)=F_\Phi(\alpha)$.
	We also have $r_K(f_\Phi(\gamma))=r_K(f_\Phi(\alpha))=F_\Phi(\gamma)$.
	Moreover, $f_\Phi(\gamma)$ is uniquely determined by the condition ${}^{1+c}\!f_\Phi(\gamma)=\chi_{\cyc}(\gamma) K^\times$, so it is enough to show that ${}^{1+c}\!f_\Phi(\alpha)=\chi_{\cyc}(\gamma) K^\times$.
	
	Using $1+c=i_{K/F}\circ N_{K/F}$ followed by the defining property of $f_\Phi(\alpha)$, then using part \eqref{item:halftr-Fab} of Lemma \ref{lem:halftr-prop} together with the final part of Lemma \ref{lem:prod-map}, and finally applying Lemma \ref{lem:extra-split} yields 
	\begin{align*}
	{}^{1+c}\!f_\Phi(\alpha) &= i_{K/F} N_{K/F}(f_\Phi(\alpha)) \\
	&= i_{K/F}\left( \chi_F\left( F_\Phi(\alpha)|_{F^{\ab}} \right) \right) \\
	&= i_{K/F}\left( \chi_F\left( \left(\prod_{x\in\Sigma} c_x^{m_x}\right) V_{F/\Q}(\gamma) \right) \right) \\
	&= i_{K/F}\left( (m F^\times_{>0}) (i_{F/\Q}\chi_{\cyc}(\gamma) F^\times_{>0}) \right) \\
	&= \chi_{\cyc}(\gamma) K^\times,
	\end{align*}
	where $m\in F^\times$ satisfies $\sgn(x(m))=(-1)^{m_x}$.
	This finishes the proof.
\end{proof}

\begin{remark}
	In \cite[(2.2.2)]{ne} the plectic Taniyama element $\widetilde{f}_\Phi(g)$ is defined for elements $g$ of a certain subgroup $\plecgpaut_1$ of $\plecgpaut$.
	It is precisely the splitting $\chi_F$ that allows us to define a plectic Taniyama element $f_\Phi$ on the entire plectic group $\plecgp$.
	Using \cite[(2.2.3)(vi)]{ne}, the same calculation as in the proof of Lemma \ref{lem:sameTanielt} shows that $f_\Phi$ extends $\widetilde{f}_\Phi$.
\end{remark}

Let $R$ be a $\Q$-algebraic torus with associated Shimura datum $(G,X)$ as in Section \ref{sse:hmv}.
Write $R(\Q)_{>0}:= R(\Q)\cap F^\times_{>0}$ and $R(\R)_{>0}:=R(\R)\cap (F\otimes_\Q\R)^\times_{>0}$, where $(F\otimes_\Q\R)^\times_{>0}$ is the preimage of $\R^\Sigma_{>0}$ under the isomorphism $(F\otimes_\Q\R)\cong\R^\Sigma$.

\begin{defn}\label{def:plec-CM}
	We define the subgroup $\plecgpcm$ of $\plecgp$ by the Cartesian diagram
	\begin{equation*}
	\begin{tikzcd}
	\plecgpcm \arrow[d] \arrow[r, hook]                            & \plecgp \arrow[d, "{\chi_F\circ\Product}"] \\
	R(\A_f)/R(\Q)_{>0} \arrow[r, hook] & \A_{F,f}^\times/F^\times_{>0}.          
	\end{tikzcd}
	\end{equation*}
	We will also write $(\plecgphash)^R_{\CM}$ for $\plecgpcm$, and denote the subgroup of $\plecgpsemi$ (resp.\ $\plecgpaut$) isomorphic to $(\plecgphash)^R_{\CM}$ under the isomorphism of Remark \ref{rem:plecgpsemi} (resp.\ Remark \ref{rem:plecgpaut}) by $(\plecgpsemi)^R_{\CM}$ (resp.\ $\plecgpaut^R_{\CM}$).
	By Lemma \ref{lem:prod-map} this is independent of the choice of $s$ (on which these isomorphisms depend).
\end{defn}

\begin{example}
	For $R=R_{F/\Q}\G_m$ we clearly have $\plecgpcm=\plecgp$.
	For $R=\G_m$, using part \eqref{item:extra-split-cycl} of Lemma \ref{lem:extra-split} one sees that the group $\plecgpaut^{\G_m}_{\CM}$ is equal to the group $\plecgpaut_0$ of \cite[\S 2.2]{ne}, i.\,e.\ $\Gamma^{\pl,\G_m}_{\CM}$ fits into the Cartesian diagram
	\begin{equation}\label{diag:plecgpcm-Nek}
	\begin{tikzcd}
	{\Gamma^{\pl,\G_m}_{\CM}} \arrow[r, hook] \arrow[d] & \plecgp \arrow[d, "\Product"] \\
	\Gamma_{\Q}^{\ab} \arrow[r, "V_{F/\Q}", hook]       & \Gamma_F^{\ab}.               
	\end{tikzcd}
	\end{equation}
	In particular, for $R=R_{F/\Q}\G_m$ and $R=\G_m$ the group $\plecgpcm$ does not depend on the choice of $\chi_F$.
	We do not know if this is the case for arbitrary $R$.
\end{example}

\begin{remark}\label{rem:Galinpleccmgp}
	For arbitrary $R$, the canonical embedding of $\Gamma_\Q$ into $\plecgp$ factors through $\plecgpcm$ because if $\alpha\in\plecgp$ is given by left translation by $\gamma\in\Gamma_\Q$, then $\chi_F\circ P(\alpha)=\chi_{\cyc}(\gamma)F^\times_{>0}$ by the final part of Lemma \ref{lem:prod-map} and part \eqref{item:extra-split-cycl} of Lemma \ref{lem:extra-split}, and $\chi_{\cyc}(\gamma)$ lies in $\A_{f}^\times \subset R(\A_f)$. 
\end{remark}

\begin{thm}\label{thm:plecact-CM}
	Let $(K,\Phi;\mathfrak{a},t)$ be a type and let $[\C^\Phi/\Phi(\mathfrak{a}),i_\Phi,R(\Q) E_t,\eta\overline{Z(\Q)}]$ be a CM point of $\Sh(G,X)$ as in \eqref{eq:compCMpt}.
	Let $\alpha\in\plecgpcm$ and $f\in\A_{K,f}^\times$ such that $f_\Phi(\alpha) = f K^\times\in\A_{K,f}^\times/K^\times$.
	Let $u\in R(\A_f)$ be such that $\chi_F\circ \Product(\alpha) = u F^\times_{>0}$.
	Finally let $\chi:=\frac{u}{N_{K/F} f}\in\A_{F,f}^\times$.
	Then $\chi$ lies in $F^\times$.
	Furthermore, define
	\begin{align}\label{eq:def-plec-act-CMpts-general}
	\alpha\left[\C^\Phi/\Phi(\mathfrak{a}),i_\Phi,R(\Q) E_t,\eta\overline{Z(\Q)}\right] := \left[\C^{\alpha\Phi}/\alpha\Phi(f \mathfrak{a}),i_{\alpha\Phi},R(\Q) E_{\chi t},f\circ\eta\overline{Z(\Q)}\right].
	\end{align}
	
	This defines a group action of $\plecgpcm$ on the set of CM points of $\Sh(G,X)$, extending the action of $\Gamma_\Q$.
\end{thm}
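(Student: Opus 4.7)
The plan is to verify in turn that (i) $\chi$ lies in $F^\times$ and its signs make $\chi t$ satisfy the positivity condition for a type with CM type $\alpha\Phi$; (ii) the right-hand side of \eqref{eq:def-plec-act-CMpts-general} is independent of the choices of $f$ and $u$; (iii) the assignment is a group action; and (iv) it extends the $\Gamma_\Q$-action described in Theorem~\ref{thm:conjCMpt}.

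For (i), the defining property of $f_\Phi(\alpha)$ gives $N_{K/F}(f)\,F^\times_{>0} = \chi_F(F_\Phi(\alpha)|_{F^{\ab}})$, and Lemma~\ref{lem:halftr-prop}\eqref{item:halftr-Fab} expresses $F_\Phi(\alpha)|_{F^{\ab}} = \Product(\alpha)\prod_{x\in\Sigma} c_x^{m_x}$. Applying $\chi_F$ and invoking Lemma~\ref{lem:extra-split}\eqref{item:extra-split-cc} to compute $\chi_F\bigl(\prod_x c_x^{m_x}\bigr)$ yields $N_{K/F}(f) \equiv u\,m \pmod{F^\times_{>0}}$, where $m \in F^\times$ satisfies $\sgn(x(m)) = (-1)^{m_x}$. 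Hence $\chi \equiv m^{-1} \pmod{F^\times_{>0}}$ lies in $F^\times$ with $\sgn(x(\chi)) = (-1)^{m_x}$. A case analysis on each $x \in \Sigma$ --- whether the unique element $\psi$ of $\alpha\Phi$ above $x$ agrees with the unique element $\varphi$ of $\Phi$ above $x$ ($m_x = 0$) or is its complex conjugate ($m_x = 1$) --- shows that in both cases $\sgn(\psi(\chi)) = \sgn(\im\psi(t))$, so $\im\psi(\chi t) > 0$ for every $\psi \in \alpha\Phi$, as required.

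For (ii), replacing $f$ by $kf$ with $k\in K^\times$ changes $\chi$ to $\chi/N_{K/F}(k)$; multiplication by $k$ on $\C^{\alpha\Phi}$ then descends to an isomorphism of the resulting quadruples, as the Riemann form $E_{\chi t/(kc(k))}$ pulls back to $E_{\chi t}$ and the level structure $kf\circ\eta$ pulls back to $f\circ\eta$. Replacing $u$ by $ru$ with $r\in R(\Q)_{>0}$ multiplies $\chi$ by $r$, and $E_{r\chi t}$ lies in the same $R(\Q)$-orbit as $E_{\chi t}$, so the polarisation class is unchanged.

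For (iii), given $\alpha,\beta \in \plecgpcm$, the cocycle identity $F_\Phi(\alpha\beta) = F_{\beta\Phi}(\alpha)F_\Phi(\beta)$ from Lemma~\ref{lem:halftr-prop}\eqref{item:halftr-cocyc}, combined with $\Product$ being a homomorphism (Lemma~\ref{lem:prod-map}), permits the choice of lifts $f_{\alpha\beta} = f_\alpha f_\beta$ and $u_{\alpha\beta} = u_\alpha u_\beta$, where $f_\alpha$ represents $f_{\beta\Phi}(\alpha)$ and $f_\beta$ represents $f_\Phi(\beta)$. A direct comparison --- tracking the CM type from $\Phi$ to $\beta\Phi$ to $\alpha\beta\Phi$ and the lattice from $\mathfrak{a}$ to $f_\beta\mathfrak{a}$ to $f_\alpha f_\beta\mathfrak{a}$ --- then yields the group law. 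For (iv), when $\alpha$ is left translation by $\gamma\in\Gamma_\Q$, Lemma~\ref{lem:sameTanielt} gives $f_\Phi(\alpha) = f_\Phi(\gamma)$, and Remark~\ref{rem:Galinpleccmgp} allows us to take $u = \chi_{\cyc}(\gamma)$, so that $\chi = \chi_{\cyc}(\gamma)/N_{K/F}(f)$, exactly reproducing the formula of Theorem~\ref{thm:conjCMpt}. The main obstacle is the sign/positivity bookkeeping in step (i), where the precise interaction between $\chi_F$ and complex conjugation (via Lemma~\ref{lem:extra-split}\eqref{item:extra-split-cc}) is essential; the cocycle tracking in step (iii) is then routine but requires care with the CM-type dependence of the lifts.
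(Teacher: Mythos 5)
Your steps (i)--(iv) track the paper's proof closely: the positivity of $\im\rho(\chi t)$ for $\rho\in\alpha\Phi$ via the defining property of $f_\Phi(\alpha)$, Lemma \ref{lem:halftr-prop}\eqref{item:halftr-Fab} and Lemma \ref{lem:extra-split}\eqref{item:extra-split-cc}, the well-definedness (which the paper dismisses as ``easy to check''), the group law via the cocycle relation of Lemma \ref{lem:halftr-prop}\eqref{item:halftr-cocyc} (note that to transport this to the Taniyama element you should invoke uniqueness in the Cartesian square of \eqref{diag:tate-extended} together with $\chi_F$ and $\Product$ being homomorphisms, but this is as routine as the paper makes it), and the comparison with Theorem \ref{thm:conjCMpt} via Lemma \ref{lem:sameTanielt} and Lemma \ref{lem:extra-split}\eqref{item:extra-split-cycl}. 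All of this is sound.

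However, there is one genuine omission. To show that the right-hand side of \eqref{eq:def-plec-act-CMpts-general} is a CM \emph{point of} $\Sh(G,X)$, it is not enough that $(K,\alpha\Phi;f\mathfrak{a},\chi t)$ is a valid type (your step (i)); the quadruple must also satisfy the compatibility condition built into the moduli description of Theorem \ref{thm:moduliHMV-general}, namely that the new level structure $f\circ\eta$ carries the class $R(\A_f)\psi$ to $R(\A_f)E_{\chi t}$. The paper checks this by computing, for $v,w\in V\otimes_\Q\A_f$, that $E_{\chi t}\bigl(f\circ\eta(v),f\circ\eta(w)\bigr)=\Tr_{K/\Q}\bigl(ut\,\eta(v)\overline{\eta(w)}\bigr)=\psi(urv,w)$ for some $r\in R(\A_f)$, and this is precisely the step that uses $u\in R(\A_f)$, i.e.\ the hypothesis $\alpha\in\plecgpcm$. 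Your argument never uses $u\in R(\A_f)$ at all, only the relation $uF^\times_{>0}=\chi_F(\Product(\alpha))$, so as written it would apply verbatim to every $\alpha\in\plecgp$ --- yet the restriction to the subgroup $\plecgpcm$ exists exactly because this polarisation/level-structure compatibility fails in general. Adding this check closes the gap.
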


\begin{proof}
	We write $\mathcal{P}$ for $[\C^\Phi/\Phi(\mathfrak{a}),i_\Phi,R(\Q) E_t,\eta\overline{Z(\Q)}]$.
	It is easy to check that \eqref{eq:def-plec-act-CMpts-general} does not depend on the choices of $f$, $u$, or the tuple $(\C^\Phi/\Phi(\mathfrak{a}),i_\Phi,R(\Q) E_t,\eta\overline{Z(\Q)})$ in the isomorphism class $\mathcal{P}$.
	We now show that the right hand side of \eqref{eq:def-plec-act-CMpts-general} does indeed define a CM point of $\Sh(G,X)$.
	
	For this we need to check two things.
	First of all, we want that $f\circ\eta$ sends $R(\A_f)\psi$ to $R(\A_f)E_{\chi t}$.
	For $v,w\in V\otimes_\Q \A_f$, we have
	\begin{align*}
	E_{\chi t}\left(f\circ\eta(v), f\circ\eta(w)\right) &= \Tr_{K/\Q}\left(u t \eta(v) \overline{\eta(w)}\right) \\
	&= E_t\left(\eta(u v), \eta(w)\right) \\
	&= \psi\left(u r v,w\right), \quad \text{for some } r\in R(\A_f).
	\end{align*}
	By definition of the group $\plecgpcm$ we have $u\in R(\A_f)$, so $ur\in R(\A_f)$ as desired.
	
	Secondly, we want that $\im \rho(\chi t)>0$ for all $\rho\in\alpha\Phi$.
	We use the definition of $f$ and $f_\Phi(\alpha)$ followed by part \eqref{item:halftr-Fab} of Lemma \ref{lem:halftr-prop} and finally the choice of $u$ together with part \eqref{item:extra-split-cc} of Lemma \ref{lem:extra-split} to calculate
	\[
	(N_{K/F} f) F^\times_{>0} = \chi_F\left(\left. F_\Phi(\alpha) \right|_{F^{\ab}}\right)
	= \chi_F\left( P(\alpha) \prod_{x\in\Sigma} c_x^{m_x} \right)
	= (u F^\times_{>0}) (m F^{\times}_{>0}),
	\]
	where the $m_x$ are defined in Lemma \ref{lem:halftr-prop} and $m\in F^\times$ satisfies $\sgn(x(m))=(-1)^{m_x}$.
	We conclude that
	\begin{align*}
	\chi\in m^{-1} F^\times_{>0},
	\end{align*}
	and since $\im \varphi(t)>0$ for all $\varphi\in\Phi$ we precisely get $\im \rho(\chi t)>0$ for all $\rho\in\alpha\Phi$.
	
	Thus $\alpha\mathcal{P}$ is well-defined.
	That it defines a group action follows easily from the cocycle relation in part \eqref{item:halftr-cocyc} of Lemma \ref{lem:halftr-prop}.
	Finally, the action extends the action of $\Gamma_\Q$ by Theorem \ref{thm:conjCMpt} and Lemma \ref{lem:sameTanielt} combined with part \eqref{item:extra-split-cycl} of Lemma \ref{lem:extra-split}.
\end{proof}

\begin{remark}
	For $R=\G_m$, Theorem \ref{thm:plecact-CM} is proved (in slightly different terms) in \cite[(2.2.5)]{ne}.
	For $R=R_{F/\Q}\G_m$, Theorem \ref{thm:plecact-CM} is stated without proof in \cite[Prop.\ 6.8]{nescholl}.
\end{remark}

\section{Plectic action on connected components}\label{se:plecpi0}

Let $R$, $G$, and $X$ be as in Section \ref{sse:hmv}.
We apply general results from the theory of Shimura varieties to describe the set of geometric connected components of the Shimura variety $\Sh(G,X)$.

\begin{lem}\label{lem:pi0SV}
	Let $U$ be a sufficiently small compact open subgroup of $G(\A_f)$.
	Then
	\begin{enumerate}
		\item\label{item:pi0SV} The set of connected components of $\Sh_U(G,X)=G(\Q)\backslash [X\times G(\A_f)/U]$ is equal to
		\[
		\pi_0(\Sh_U(G,X)) = R(\Q)\backslash \left[\VZ \times R(\A_f)/d(U) \right] =:\Sh_{d(U)}(R,\VZ),
		\]
		where $d\colon G \to R$ is the determinant map, see \eqref{diag:defG}.
		
		\item\label{item:pi0map} For $[(z_x)_{x\in\Sigma},g]\in\Sh_U(G,X)$, we have
		\[
		\pi_0([(z_x)_{x\in\Sigma},g]) = [(\sgn\im z_x)_{x\in\Sigma},d(g)]\in \Sh_{d(U)}(R,\VZ).
		\]
	\end{enumerate}
\end{lem}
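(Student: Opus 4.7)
The plan is to compute $\pi_0(\Sh_U(G, X))$ in two steps: first identify the connected components of $X \times G(\A_f)/U$ and take the $G(\Q)$-quotient, then push the resulting set down from $G$ to $R$ via the determinant map $d$.

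For the first step, Lemma~\ref{lem:XVZ} presents the map $X \to \VZ$, $(z_x)_x \mapsto (\sgn\im z_x)_x$, as a surjection whose fibres are products of upper and lower half-planes $\mathfrak{h}, -\mathfrak{h}$; these fibres are contractible, hence connected. So $\pi_0(X) = \VZ$. Since $G(\A_f)/U$ is discrete, I obtain
$$\pi_0\bigl(X \times G(\A_f)/U\bigr) = \VZ \times G(\A_f)/U,$$
and the action of $G(\Q)$ on the first factor factors through the composition $d\colon G(\Q) \to R(\Q) \hookrightarrow R(\R) \twoheadrightarrow \VZ$. Hence
$$\pi_0(\Sh_U(G, X)) = G(\Q) \backslash \bigl(\VZ \times G(\A_f)/U\bigr).$$

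For the second step, the map induced by $d$,
$$G(\Q) \backslash \bigl(\VZ \times G(\A_f)/U\bigr) \longrightarrow R(\Q) \backslash \bigl(\VZ \times R(\A_f)/d(U)\bigr),$$
is to be shown to be a bijection. The essential input, via Lemma~\ref{lem:Gad}, is that $G^{\der} = R_{F/\Q}\SL_2$ is semisimple and simply connected, and that $\SL_2(F \otimes_\Q \R)$ is non-compact since $F$ is totally real. Consequently strong approximation holds for $G^{\der}$, i.e.\ $G^{\der}(\Q)$ is dense in $G^{\der}(\A_f)$. Moreover, $H^1(\Q, G^{\der}) = H^1(F, \SL_2) = 0$ and $H^1(\Q_p, G^{\der}) = 0$ for each finite prime $p$ (Hasse principle and Kneser's theorem), so both $d\colon G(\Q) \to R(\Q)$ and $d\colon G(\A_f) \to R(\A_f)$ are surjective, giving surjectivity of the map. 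For injectivity, I would lift a witness $r \in R(\Q)$ of the equivalence to some $g \in G(\Q)$ with $d(g) = r$, then adjust $g$ by an element of $G^{\der}(\Q)$, chosen via strong approximation, so that $g h_1 \in h_2 U$.

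Part~\eqref{item:pi0map} then follows by tracing $[(z_x)_x, g]$ through the identifications above: it corresponds to $[(\sgn\im z_x)_x, g]$ in $G(\Q)\backslash(\VZ \times G(\A_f)/U)$, which in turn maps to $[(\sgn\im z_x)_x, d(g)]$ in $\Sh_{d(U)}(R, \VZ)$. The main obstacle is arranging the injectivity argument in step two: strong approximation must be combined with the openness of $U \cap G^{\der}(\A_f)$ to produce the required $G^{\der}(\Q)$-correction sending $g h_1$ into $h_2 U$, but this is a standard manoeuvre for semisimple simply connected groups satisfying strong approximation.
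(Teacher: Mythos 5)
Your argument is correct, and it is in substance a self-contained re-proof of what the paper simply cites: the paper's proof of Lemma \ref{lem:pi0SV} consists of invoking Milne's general theorem on $\pi_0$ of Shimura varieties with simply connected derived group (\cite[Thm 5.17, pp.\ 59, 62--63]{misv}), together with Lemma \ref{lem:Gad} to identify $G\to G/G^{\der}$ with $d\colon G\to R$ and the set $Y$ there with $\VZ$. You instead unwind that citation: $\pi_0(X)=\VZ$ with the $G(\Q)$-action through $\sgn\circ\, d$, then the $d$-pushdown $G(\Q)\backslash[\VZ\times G(\A_f)/U]\to R(\Q)\backslash[\VZ\times R(\A_f)/d(U)]$ is bijective by strong approximation for $G^{\der}=R_{F/\Q}\SL_2$ (non-compact at every infinite place since $F$ is totally real) and surjectivity of $d$ on $\Q$- and $\A_f$-points; your injectivity manoeuvre (lift $r\in R(\Q)$ to $g\in G(\Q)$, write $gg_1\in G^{\der}(\A_f)\,g_2U$, and correct by $\delta\in G^{\der}(\Q)$ chosen in the open set $g_2Uh^{-1}g_2^{-1}\cap G^{\der}(\A_f)$, noting $d(\delta)=1$ so the $\VZ$-coordinate is unaffected) is exactly the engine behind Milne's theorem, so both approaches rest on the same input, yours just makes it explicit and slightly more elementary by avoiding the passage through $X^+$ and real approximation. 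Two small remarks: the Galois-cohomology justification of surjectivity is overkill and, for the adelic statement, strictly needs a word about surjectivity on integral points at almost all places --- but here surjectivity is immediate since $\diag(r,1)$ lies in $G$ whenever $r\in R$, by the Cartesian definition \eqref{diag:defG}; and you implicitly use that $\pi_0$ commutes with the $G(\Q)$-quotient, which is fine because the components of $X\times G(\A_f)/U$ are open, though it deserves a sentence.
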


\begin{proof}
	Part \eqref{item:pi0SV} follows from \cite[Thm 5.17]{misv} combined with \cite[p.\ 63]{misv}.
	These results apply because $G^{\der}=R_{F/\Q}\SL_2$ is simply connected, and yield \eqref{item:pi0SV} because $G \to G/G^{\der}$ is given by $d\colon G \to R$, see Lemma \ref{lem:Gad}, and the set $Y$ in \cite{misv} is precisely equal to $\VZ$.
	
	Similarly, part \eqref{item:pi0map} follows from \cite[p.\ 59]{misv} if one uses the connected component $X^+:=\mathfrak{h}^{\Sigma}$ of $X$.
\end{proof}

\begin{remark}
	The pair $(R,\VZ)$ is not a Shimura datum in the usual sense.
	However, the double quotient $\Sh_{d(U)}(R,\VZ)$ is a zero-dimensional Shimura variety in the sense of \cite[p.\ 62--63]{misv}.
\end{remark}

\begin{remark}
	Taking the projective limit over all $U$ in Lemma \ref{lem:pi0SV} yields a map $\pi_0\colon \Sh(G,X) \to \Sh(R,\VZ)$, which can be described by the same formula.
\end{remark}

\begin{defn}\label{def:Galact-pi0}
	We let $\Gamma_\Q$ act on $\Sh(R,\VZ)$ as follows.
	Let $\gamma\in\Gamma_\Q$ and let $q\in\A_\Q^\times$ be such that $\art_\Q(q)=\gamma|_{\Q^{\ab}}$.
	Moreover, let us denote the embedding $\G_m \subset R$ by $i$.
	Then we define
	\[
	\gamma [y,g] := [i(q)_\infty y, i(q)_f g], \quad [y,g]\in\Sh(R,\VZ),
	\]
	where $i(q)=(i(q)_\infty,i(q)_f)\in R(\R) \times R(\A_f) = R(\A)$. 
\end{defn}

\begin{lem}
	The map
	\[
	\pi_0 \colon \Sh(G,X) \to \Sh(R,\VZ)
	\]
	is $\Gamma_\Q$-equivariant.
\end{lem}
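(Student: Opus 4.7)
Plan: To prove the $\Gamma_\Q$-equivariance of $\pi_0$, I will restrict to the Zariski-dense, Galois-stable subset of CM points and compare the explicit descriptions given by Theorem \ref{thm:conjCMpt} and Definition \ref{def:Galact-pi0} via Lemma \ref{lem:pi0SV}(\ref{item:pi0map}).

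Fix a CM point $\mathcal{P}=[\C^\Phi/\Phi(\mathfrak{a}),i_\Phi,R(\Q)E_t,\eta\overline{Z(\Q)}]$ of type $(K,\Phi;\mathfrak{a},t)$ and an $F$-linear isomorphism $a\colon K\to V=F^2$ with $a^*\psi\in R(\Q)E_t$, so that $\mathcal{P}$ is represented in $X\times G(\A_f)$ by $(h,a\circ\eta)$ for the Hodge datum $h$ coming from $\Phi$. As observed in the proof of Theorem \ref{thm:plecact-CM}, the element $\chi=\chi_{\cyc}(\gamma)/f\overline{f}$ lies in $R(\Q)=F^\times\cap R(\A_f)$, so the same $a$ also sends $R(\Q)E_{\chi t}$ to $R(\Q)\psi$, and by Theorem \ref{thm:conjCMpt} the Galois conjugate $\gamma\mathcal{P}$ is represented by $(h',a\circ f\circ\eta)$ for the Hodge datum $h'$ coming from $\gamma\Phi$ and any lift $f\in\A_{K,f}^\times$ of $f_\Phi(\gamma)$. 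Applying Lemma \ref{lem:pi0SV}(\ref{item:pi0map}) gives
\[
\pi_0(\mathcal{P})=[y,d(a\circ\eta)],\qquad \pi_0(\gamma\mathcal{P})=[y',d(a\circ f\circ\eta)],
\]
where $y,y'\in\VZ$ are the sign vectors attached to $h,h'$.

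For the finite-adelic component, multiplication by $f$ on $K\otimes_\Q\A_f\cong F^2\otimes_\Q\A_f$ has $F$-linear determinant $N_{K/F}(f)$, so $d(a\circ f\circ\eta)=N_{K/F}(f)\cdot d(a\circ\eta)$. The defining relation ${}^{1+c}f_\Phi(\gamma)=\chi_{\cyc}(\gamma)K^\times$, combined with $\A_{F,f}^\times\cap K^\times=F^\times$ and $R(\A_f)\cap F^\times=R(\Q)$, yields $N_{K/F}(f)=\chi_{\cyc}(\gamma)\cdot k$ for some $k\in R(\Q)$; on the other side, $\art_\Q(q)=\gamma|_{\Q^{ab}}$ forces $q_f=\chi_{\cyc}(\gamma)\cdot\mu$ for some $\mu\in\Q^\times$ with $\sgn q_\infty=\sgn\mu$. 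Setting $\lambda:=k\mu^{-1}\in R(\Q)$, a direct comparison of the complex structures induced by $\Phi$ versus $\gamma\Phi$ on $K\otimes_\Q\R$ (using the positivity of $t$ on $\Phi$ and of $\chi t$ on $\gamma\Phi$) shows that $y'/y=\sgn(\chi)$ in $\VZ$, and the identity $\sgn x(\chi)=\sgn x(k)\sgn\mu$ then gives $y'=\lambda\cdot i(q)_\infty y$. Hence $[y',d(a\circ f\circ\eta)]=[i(q)_\infty y,i(q)_f d(a\circ\eta)]$ in $\Sh(R,\VZ)$, as required.

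The main obstacle is the consistent bookkeeping of the various $R(\Q)$-, $K^\times$-, and $\Q^\times$-ambiguities, and matching the sign conventions of the Artin map, the cyclotomic character, and the plectic half-transfer. A more conceptual alternative is to observe that $d\colon(G,X)\to(R,\VZ)$ is a morphism of (generalized) Shimura data over the common reflex field $\Q$; the induced $\pi_0$ then arises from a $\Q$-morphism of canonical models and is automatically $\Gamma_\Q$-equivariant, with Definition \ref{def:Galact-pi0} encoding the reciprocity law for the zero-dimensional Shimura variety $\Sh(R,\VZ)$.
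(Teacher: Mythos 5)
Your closing ``conceptual alternative'' is in fact the paper's actual proof: the paper simply invokes the general equivariance statement for the $\pi_0$-map of a canonical model (\cite[(64)]{misv}), after checking that the reciprocity morphism of \cite[(60)]{misv} for $(G,X)$ is the inclusion $i\colon \G_m\to R$, which is exactly what makes Definition \ref{def:Galact-pi0} the right action. If you take that route you must actually carry out this (easy) cocharacter computation; you state it as an aside without verifying it, and you should also note that $(R,\VZ)$ is only a zero-dimensional Shimura variety in the extended sense of \cite[p.\ 62--63]{misv}.

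Your main argument via CM points, however, has a genuine gap at its central step. You claim that $\chi=\chi_{\cyc}(\gamma)/f\overline{f}$ lies in $R(\Q)$ ``as observed in the proof of Theorem \ref{thm:plecact-CM}'', and use this to keep the same trivialisation $a$ for $\gamma\mathcal{P}$. But that proof only shows $\chi\in m^{-1}F^\times_{>0}$, i.e.\ it controls the \emph{signs} of $\chi$; in general $\chi$ is merely an element of $F^\times$ (already for $R=\G_m$ there is no reason for $\chi_{\cyc}(\gamma)/N_{K/F}(f)$ to be rational). Without $\chi\in R(\Q)$, the map $a$ does not send $R(\Q)E_{\chi t}$ to $R(\Q)\psi$, so $(h',a\circ f\circ\eta)$ need not represent $\gamma\mathcal{P}$ (condition \eqref{eq:doublestar} fails, and $a\circ h'\circ a^{-1}$ need not even lie in $X$); the same problem recurs when you write $N_{K/F}(f)=\chi_{\cyc}(\gamma)k$ and assert $k\in R(\Q)$ --- the identity $R(\A_f)\cap F^\times=R(\Q)$ is fine, but you have no reason that $N_{K/F}(f)/\chi_{\cyc}(\gamma)$ lies in $R(\A_f)$, and a discrepancy $k\in F^\times\setminus R(\Q)$ cannot be absorbed into the $R(\Q)$-quotient defining $\Sh(R,\VZ)$. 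The repair is the one the paper uses in the proof of Theorem \ref{thm:pi0-plec-equiv}: replace $a$ by $a'=\begin{pmatrix}1&\\&\chi\end{pmatrix}\circ a$, adapted to $\chi t$. Then $a'$ does satisfy \eqref{eq:doublestar} for the conjugate quadruple, both archimedean components become $(1)_{x\in\Sigma}$ (so your hand-waved sign comparison disappears), and $d(a'\circ f\circ\eta)=\chi N_{K/F}(f)\,d(a\circ\eta)=\chi_{\cyc}(\gamma)\,d(a\circ\eta)$, which matches $i(q)_f\,d(a\circ\eta)$ modulo $R(\Q)_{>0}$ as required. With that correction (and a sentence explaining why checking on CM points suffices --- special points meet every geometric connected component and both $P\mapsto\pi_0(\gamma P)$ and $P\mapsto\gamma\pi_0(P)$ factor through $\pi_0$), your computation becomes essentially the $\Gamma_\Q$-special case of the paper's proof of Theorem \ref{thm:pi0-plec-equiv}.
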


\begin{proof}
	This is \cite[(64)]{misv} because for the Shimura datum $(G,X)$ the reciprocity morphism defined in \cite[(60)]{misv} is easily seen to be equal to $i\colon \G_m \to R$.
\end{proof}

For technical reasons, we use the following slightly different description of the connected components of $\Sh(G,X)$.
To that extent, let us equip $R(\A)\subset \A_F^\times$ with the subspace topology, and let $C_R:=R(\A)/R(\Q)$.
For example, for $R=R_{F/\Q}\G_m$ the group $C_R$ is equal to the idele class group $C_F$ of $F$.

\begin{lem}\label{lem:pi0CR}
	We have
	\[
	\pi_0(\Sh(G,X)) = \pi_0(C_R).
	\]
\end{lem}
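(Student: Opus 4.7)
My plan is to compute each side and identify them. Starting with the left-hand side, I would apply Lemma~\ref{lem:pi0SV}(\ref{item:pi0SV}) and the remark immediately following it, then pass to the projective limit over sufficiently small compact open $U\subset G(\A_f)$ to obtain
$$\pi_0(\Sh(G,X)) = \varprojlim_U R(\Q)\backslash [\VZ \times R(\A_f)/d(U)] = R(\Q)\backslash [\VZ \times R(\A_f)].$$
To justify collapsing the limit I need that $\{d(U)\}_U$ forms a neighborhood base of the identity in $R(\A_f)$. This reduces to surjectivity of $d\colon G(\A_f)\to R(\A_f)$, which I would deduce from $\ker(d)=G^{\der}=R_{F/\Q}\SL_2$ being simply connected, together with the vanishing of $H^1(\Q_p,\SL_2)$ at all finite places $p$.

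For the right-hand side, I would invoke the standard fact that $R(\Q)$ sits discretely in $R(\A)$ (for any $\Q$-algebraic torus), so the quotient map $R(\A)\to C_R$ is a local homeomorphism and
$$\pi_0(C_R) = \pi_0(R(\A))/R(\Q).$$
I would then split $R(\A)=R(\R)\times R(\A_f)$; since $R(\A_f)$ is locally compact and totally disconnected it contributes $R(\A_f)$ itself to $\pi_0$, giving $\pi_0(R(\A))=\pi_0(R(\R))\times R(\A_f)$.

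The crux of the argument is the identification $\pi_0(R(\R))\cong \VZ$. The identity component $R(\R)^0$ is the preimage in $R(\R)$ of the identity component of the ambient real torus $(F\otimes_\Q\R)^\times\cong(\R^\times)^\Sigma$, namely $R(\R)\cap (\R^\times_{>0})^\Sigma$. Taking signs then induces an isomorphism $R(\R)/R(\R)^0 \xrightarrow{\sim} \VZ$, which is essentially the very definition of $\VZ$ in \S\ref{sse:hmv}. Comparing the two resulting expressions and checking that the $R(\Q)$-actions match (in both presentations, $R(\Q)$ acts on $\VZ$ via its sign pattern and on $R(\A_f)$ via the diagonal embedding) then yields the claim. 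I expect the main technical point to be the cofinality argument needed to collapse the projective limit; the remaining work amounts to unpacking definitions.
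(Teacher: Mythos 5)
Your proposal founders on the step you dismiss as a mere cofinality issue: the collapse of the projective limit. Even granting that the $d(U)$ are cofinal among compact open subgroups of $R(\A_f)$ (your $H^1(\Q_p,\SL_2)=0$ argument for that is fine, and the paper uses the same fact implicitly), the equality $\varprojlim_U R(\Q)\backslash[\VZ\times R(\A_f)/d(U)] = R(\Q)\backslash[\VZ\times R(\A_f)]$ is false in general. Take $R=R_{F/\Q}\G_m$ with $F$ real quadratic: the left-hand side is $\varprojlim$ of narrow ray class groups, i.e.\ $\Gal(F^{\ab}/F)\cong\pi_0(C_F)$, a profinite set, whereas your candidate is $\A_{F,f}^\times/F^\times_{>0}$, which surjects onto it with the huge kernel $\ker(\rec_F)\cong\O_{F,>0}^\times\otimes_\Z(\A_{\Q,f}/\Q)$. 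The failure comes from the infinite unit group: $F^\times_{>0}$ (more generally $R(\Q)\cdot(R(\R)^0\times\{1\})$) is not closed in $\A_{F,f}^\times$ (resp.\ $R(\A)$), so the natural map from the naive quotient to the limit is not injective. A neighbourhood base at the identity does not give you the collapse; this is precisely the subtlety that forces the closure $\overline{Z(\Q)}$ (rather than $Z(\Q)$) into the description of $\Sh(G,X)$ itself.

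Your computation of the right-hand side has the parallel error: "the quotient map $R(\A)\to C_R$ is a local homeomorphism, hence $\pi_0(C_R)=\pi_0(R(\A))/R(\Q)$" is not a valid implication. Images of connected components need not be closed, and the identity component of $C_R$ is the \emph{closure} of the image of $R(\R)^0$, which is strictly larger than the image whenever $R(\Q)$ contains units of infinite order; again $\pi_0(C_R)$ is compact while $\pi_0(R(\A))/R(\Q)$ is not. So both of your intermediate identifications fail (they fail "compatibly", which is why you still land on the true statement, but neither equality holds). The identification $\pi_0(R(\R))\cong\VZ$, which you call the crux, is correct but is not where the content lies. The paper's proof is aimed exactly at the point you skip: it identifies $\pi_0(\Sh(G,X))$ with $\varprojlim_U\pi_0(C_R/U')$ and then proves that the canonical map $C_R\to\varprojlim_U C_R/U'$ is an isomorphism, using discreteness of $R(\Q)$ in the \emph{full} adele group $R(\A)$ for injectivity and completeness of $C_R$ for surjectivity, before applying $\pi_0$. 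Some argument of this completeness/closure type is unavoidable, and your proposal does not contain it.
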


\begin{proof}
	By part \eqref{item:pi0SV} of Lemma \ref{lem:pi0SV}, we have $\pi_0(\Sh(G,X)) = \varprojlim_U \Sh_U(R,\VZ)$, where $U$ runs over the compact open subgroups of $R(\A_f)$.
	By definition, $\Sh_U(R,\VZ)=\pi_0(C_R/U')$ for $U':=(R(\Q).[1\times U])/R(\Q)$, where $1$ denotes the trivial subgroup in $R(\R)$.
	As taking $\pi_0$ commutes with taking the projective limit over $U$, we are left to show that the canonical map
	\[
	p\colon C_R \longrightarrow  \varprojlim_U C_R/U'
	\]
	is an isomorphism.
	The map $p$ is injective because $R(\Q)$ is discrete in $R(\A)$ and the intersection over all $U$ is trivial.
	It is surjective because the topological group $C_R$ is complete (because it is a subgroup of $C_F$, which is complete), and $R(\Q)$ is a discrete subgroup of $R(\R)$. 
\end{proof}

From now on, let us denote the fixed embedding $\G_m \subset R$ (resp.\ $R \subset R_{F/\Q}\G_m$) by $i$ (resp.\ $j$).

\begin{lem}\label{lem:pi0-Galact}
	If we let $\gamma\in\Gamma_\Q$ act on $\pi_0(C_R)$ as multiplication by $i\left(\art_\Q^{-1}(\gamma|_{\Q^{\ab}})\right)$, then the bijection of Lemma \ref{lem:pi0CR} is $\Gamma_\Q$-equivariant.
\end{lem}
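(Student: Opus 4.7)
The plan is to unpack the bijection of Lemma \ref{lem:pi0CR} at finite level, trace a class $[y,g]$ through it, and check that the Galois action of Definition \ref{def:Galact-pi0} coincides with left translation by $i(q)$, where $q\in\A^\times$ is any lift of $\gamma|_{\Q^{\ab}}$ under $\art_\Q$.

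For the unpacking, I would fix a compact open $U\subset R(\A_f)$ and set $U':=(R(\Q)\cdot[1\times U])/R(\Q)\subset C_R$ as in the proof of Lemma \ref{lem:pi0CR}. The identity component of $R(\R)$ is $R(\R)_{>0}:=R(\R)\cap(\R^\times_{>0})^\Sigma$, and the composite $R(\R)/R(\R)_{>0}\hookrightarrow(\R^\times)^\Sigma/(\R^\times_{>0})^\Sigma$ realises $\pi_0(R(\R))=\VZ$. Hence $\Sh_U(R,\VZ)=R(\Q)\backslash[\VZ\times R(\A_f)/U]$ is naturally identified with $\pi_0(C_R/U')$ by sending the class of $[\tilde y,g]\in C_R/U'$ to $[y,g]$, where $y\in\VZ$ is the image of $\tilde y\in R(\R)$. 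Applying Definition \ref{def:Galact-pi0}, the Galois translate $\gamma\cdot[y,g]=[i(q)_\infty y,i(q)_f g]$ is the image under this identification of the class of $[i(q)_\infty\tilde y,i(q)_f g]=i(q)\cdot[\tilde y,g]$, i.\,e.\ of left translation by $i(q)\in R(\A)$ on $C_R/U'$.

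Finally I would verify that left multiplication by $i(q)$ descends to a well-defined automorphism of $\pi_0(C_R)$ depending only on $\gamma$, and pass to the limit over $U$. The kernel of $\art_\Q\colon C_\Q\to\Gamma_\Q^{\ab}$ is the identity component $C_\Q^0$, which is the image of $\R^\times_{>0}\subset\A^\times$; hence any two lifts $q,q'\in\A^\times$ of $\gamma|_{\Q^{\ab}}$ differ by $\alpha r$ with $\alpha\in\Q^\times$ and $r\in\R^\times_{>0}$. The image $i(\alpha)\in R(\Q)$ is trivial in $C_R$, while $i(r)\in R(\R)_{>0}$ lies in the identity component of $R(\A)$ and so is trivial in $\pi_0(C_R)$. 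Since the level-wise identifications of Lemma \ref{lem:pi0CR} are compatible as $U$ shrinks and the action constructed above is the limit of the level-wise actions, passing to the projective limit yields the desired $\Gamma_\Q$-equivariance. The only step needing real care is the identification $\VZ\cong\pi_0(R(\R))$ together with the compatibility of left translation by $i(q)_\infty\in R(\R)$ with the quotient $R(\R)\twoheadrightarrow\pi_0(R(\R))$, both of which are immediate from continuity; everything else is straightforward bookkeeping.
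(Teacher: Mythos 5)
Your proof is correct and follows essentially the same route as the paper, whose proof simply notes that the claim follows directly from the explicit description of the bijection in Lemma \ref{lem:pi0CR} and the action in Definition \ref{def:Galact-pi0}; you have just written out the level-wise identification, the independence of the lift $q$, and the passage to the limit that this one-line argument implicitly uses.
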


\begin{proof}
	This follows directly from the explicit description of the bijection in Lemma \ref{lem:pi0CR} and the Galois action on $\pi_0(\Sh(G,X))$ in Definition \ref{def:Galact-pi0}.
\end{proof}

Lemma \ref{lem:pi0-Galact} motivates the following definition.

\begin{defn}\label{def:pi0-plec}
	We define the group $\plecgppi$ by the Cartesian diagram
	\begin{equation}\label{diag:def-pi0gp}
	\begin{tikzcd}
	\plecgppi \arrow[d, "\plecpiprod"'] \arrow[r]                            & \plecgp \arrow[d, "{\Product}"] \\
	\pi_0(C_R) \arrow[r, "\art_F\circ j"] & \Gamma_F^{\ab}.          
	\end{tikzcd}
	\end{equation}
	Moreover, we let $\alpha\in\plecgppi$ act on $\pi_0(C_R)$ as multiplication by $\plecpiprod(\alpha)$.
\end{defn}

\begin{remark}
	If $j\colon \pi_0(C_R) \to \pi_0(C_F)$ is injective, then so is the bottom and hence also the top horizontal arrow in \eqref{diag:def-pi0gp}.
	Thus in this case the group $\plecgppi$ canonically embeds into $\plecgp$.
	However, the injectivity of $j$ may depend on the choice of $R$, so the group $\plecgppi$ is not a subgroup of $\plecgp$ in general.
	See \cite[\S 5.3]{leon-thesis} for a more detailed discussion.
\end{remark}

\begin{example}
	For $R=R_{F/\Q}\G_m$, the map $j$ is equal to the identity and $\art_F$ is an isomorphism, thus $\plecgppi=\plecgp$.
	For $R=\G_m$, the bottom left entries of diagrams \eqref{diag:plecgpcm-Nek} and \eqref{diag:def-pi0gp} are isomorphic via $\art_\Q$, and the map $j\colon \pi_0(C_\Q) \to \pi_0(C_F)$ corresponds to the transfer map $V_{F/\Q}\colon \Gamma_\Q^{\ab} \to \Gamma_F^{\ab}$ under class field theory.
	Thus the groups $\Gamma^{\pl,\G_m}_{\pi_0}$ and $\Gamma^{\pl,\G_m}_{\CM}$ are canonically isomorphic.
\end{example}

\begin{prop}\label{lem:plecCM-inside-pi0}
	The group $\plecgpcm$ canonically embeds into $\plecgppi$.
	In particular, $\Gamma_\Q$ canonically embeds into $\plecgppi$, and restricted to $\Gamma_\Q$ the actions in Definition \ref{def:pi0-plec} and in Lemma \ref{lem:pi0-Galact} agree.
\end{prop}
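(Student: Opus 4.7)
The plan is to invoke the universal property of the Cartesian square defining $\plecgppi$. The key input is a natural map
\[
\phi\colon R(\A_f)/R(\Q)_{>0} \longrightarrow \pi_0(C_R), \quad r \longmapsto [(1,r)],
\]
viewing $R(\A_f)$ inside $R(\A)=R(\R)\times R(\A_f)$. Since $R(\A_f)$ is totally disconnected, the identity component $C_R^0$ of $C_R=R(\A)/R(\Q)$ is the image of $R(\R)^0\times\{1\}$. Moreover, $R(\R)^0=R(\R)_{>0}$: the latter is open in $R(\R)$ and, via the logarithm, embeds as a closed subgroup of $\R^{\Sigma}$, hence equals its connected component since $\R^{\Sigma}$ is torsion-free. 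Consequently $R(\Q)\cap R(\R)^0=R(\Q)_{>0}$, which shows that the kernel of $R(\A_f)\to \pi_0(C_R)$ is precisely $R(\Q)_{>0}$, so $\phi$ is well-defined (and in fact injective).

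Next, I would check that the two compositions $\plecgpcm\hookrightarrow\plecgp\xrightarrow{\Product}\Gamma_F^{\ab}$ and $\plecgpcm\to R(\A_f)/R(\Q)_{>0}\xrightarrow{\phi}\pi_0(C_R)\xrightarrow{\art_F\circ j}\Gamma_F^{\ab}$ coincide. For $\alpha\in\plecgpcm$ with image $r\in R(\A_f)/R(\Q)_{>0}$, the Cartesian diagram of Definition \ref{def:plec-CM} gives $\chi_F(\Product(\alpha))=j(r)$ in $\A_{F,f}^\times/F^\times_{>0}$. Applying $\rec_F$ and using $\rec_F\circ\chi_F=\id$ reduces the check to the identity $\rec_F(j(r))=\art_F(j(\phi(r)))$. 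This holds because $\rec_F$ factors as the canonical isomorphism $\A_{F,f}^\times/F^\times_{>0}\xrightarrow{\sim}\pi_0(C_F)$ followed by $\art_F$, and both sides send $r$ to the class of $(1,j(r))$ in $\pi_0(C_F)$. The universal property of the Cartesian square then produces a canonical map $\plecgpcm\to\plecgppi$, which is injective because its composition with $\plecgppi\to\plecgp$ is the inclusion $\plecgpcm\hookrightarrow\plecgp$.

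For the remaining claims, Remark \ref{rem:Galinpleccmgp} already gives the embedding $\Gamma_\Q\hookrightarrow\plecgpcm$, and composing yields $\Gamma_\Q\hookrightarrow\plecgppi$. For $\gamma\in\Gamma_\Q$, the same remark identifies its image in $R(\A_f)/R(\Q)_{>0}$ as $i(\chi_{\cyc}(\gamma))$, so $\plecpiprod$ of its image in $\plecgppi$ is the class of $(1,i(\chi_{\cyc}(\gamma)))$ in $\pi_0(C_R)$; multiplication by this element is exactly the Galois action of Lemma \ref{lem:pi0-Galact}, since $\chi_{\cyc}(\gamma)=\art_\Q^{-1}(\gamma|_{\Q^{\ab}})$. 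The only slightly nontrivial point is the topological verification $R(\R)^0=R(\R)_{>0}$ underpinning the construction of $\phi$; once this is in place, everything else is a formal consequence of the two Cartesian diagrams and the compatibility of $\rec_F$ with $\art_F$.
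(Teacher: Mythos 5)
Your proposal is correct and takes essentially the same route as the paper: the paper assembles exactly the maps you describe into one commutative diagram (the Cartesian square of Definition \ref{def:plec-CM}, the triangle $\rec_F\circ\chi_F=\id$, the $\art_F$--$\rec_F$ compatibility, and the square induced by $R(\A_f)\subset R(\A)$ and $\A_{F,f}^\times\subset\A_F^\times$), then invokes the Cartesian property of \eqref{diag:def-pi0gp} and Remark \ref{rem:Galinpleccmgp} for the statement about $\Gamma_\Q$, just as you do. The only caveat is your justification of $R(\R)_{>0}=R(\R)^0$: ``$\R^{\Sigma}$ is torsion-free'' is not the reason (closed subgroups of $\R^{\Sigma}$ can be disconnected); the correct observation is that the logarithm of $R(\R)_{>0}$ is cut out by linear equations coming from the characters defining $R$ inside $R_{F/\Q}\G_m$, hence is a linear subspace and in particular connected --- a point the paper leaves implicit when it says the bottom vertical arrows are induced by the inclusions.
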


\begin{proof}
	Look at the diagram 
	\begin{equation}\label{diag:CMgp-inside-pi0gp}
	\begin{tikzcd}
	\plecgpcm \arrow[r, hook] \arrow[d] & \plecgp \arrow[d, "{\chi_F\circ\Product}"] \arrow[rdd, "{\Product}", bend left=49] &                                         \\
	R(\A_f)/R(\Q)_{>0} \arrow[r, hook] \arrow[d]    & {\A_{F,f}^\times/F^\times_{>0}} \arrow[rd, "\rec_F", two heads] \arrow[d, two heads]       &  \\
	\pi_0(C_R) \arrow[r, "j"]               & \pi_0(C_F) \arrow[r, "\art_F", "\sim"']  & \Gamma_F^{\ab}.
	\end{tikzcd}
	\end{equation}
	The top square is the Cartesian square in Definition \ref{def:plec-CM}, the top right hand triangle commutes because $\rec_F \circ \chi_F = \id$, and the bottom right hand triangle commutes by the relationship between $\art_F$ and $\rec_F$. 
	Moreover, the bottom vertical arrows are induced from the inclusions $R(\A_f) \subset R(\A)$ and $\A_{F,f}^\times \subset \A_F^\times$, respectively, thus the bottom square commutes by functoriality of taking quotients and applying $\pi_0$. 
	
	Looking only at the outer edges of \eqref{diag:CMgp-inside-pi0gp}, we conclude that the embedding $\plecgpcm \subset \plecgp$ factors through $\plecgppi$ by the Cartesian diagram \eqref{diag:def-pi0gp} and we get the commutative diagram
	\begin{equation}\label{diag:CMgp-pigp-incl}
	\begin{tikzcd}
	\plecgpcm \arrow[rrd, hook, bend left] \arrow[rd, dashed, hook] \arrow[rdd, "\lambda"', bend right] &                                               &                               \\
	& \plecgppi \arrow[r] \arrow[d, "\plecpiprod"'] & \plecgp \arrow[d, "\Product"] \\
	& \pi_0(C_R) \arrow[r]                          & \Gamma_F^{\ab},               
	\end{tikzcd}
	\end{equation}
	where $\lambda$ denotes the composition of the two left hand vertical arrows of \eqref{diag:CMgp-inside-pi0gp}.
	Finally, for $\gamma\in\Gamma_\Q$ and $\alpha\in\plecgpcm$ given by left translation by $\gamma$, we have
	\[
	\plecpiprod(\alpha)=\lambda(\alpha)=i\left(\art_\Q^{-1}(\gamma|_{\Q^{\ab}})\right),
	\]
	where the second equality follows from Remark \ref{rem:Galinpleccmgp}.
	This proves the last assertion. 
\end{proof}

We are ready to prove our main result.

\begin{thm}\label{thm:pi0-plec-equiv}
	Let $\Sh(G,X)_{\CM}$ denote the set of CM points of $\Sh(G,X)$.
	Then the map
	\[
	\pi_0\colon \Sh(G,X)_{\CM} \longrightarrow \Sh(R,\VZ)
	\]
	is $\plecgpcm$-equivariant.
\end{thm}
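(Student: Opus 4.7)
The plan is to compute $\pi_0(\alpha\mathcal{P})$ and $\plecpiprod(\alpha)\cdot\pi_0(\mathcal{P})$ as explicit elements of $\pi_0(C_R) = R(\A)/R(\R)^0 R(\Q)$ via Lemmas \ref{lem:pi0SV} and \ref{lem:pi0CR}, and to verify they agree. For a CM point $\mathcal{P} = [\C^\Phi/\Phi(\mathfrak{a}),i_\Phi|_F,R(\Q)E_t,\eta\overline{Z(\Q)}]$, I would first use Theorem \ref{thm:moduliHMV-general} to represent it as $[ah_Aa^{-1}, a\circ\eta]\in\Sh(G,X)(\C)$ for an $F$-linear isomorphism $a\colon K\to V$ with $a^*\psi\in R(\Q)E_t$ and $ah_Aa^{-1}\in X$. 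The explicit formula of Lemma \ref{lem:pi0SV} then expresses the $\pi_0$-image of $\mathcal{P}$ as the class of $(y_\infty,\, \det(a)\cdot N_{K/F}(\eta))\in R(\R)\times R(\A_f) = R(\A)$, where $y_\infty\in R(\R)$ lifts the sign pattern $(\sgn\im z_x)_{x\in\Sigma}$ of the Hodge structure $ah_Aa^{-1}\in X$ and $\det(a)\in F^\times$ denotes the $F$-linear determinant of $a$ viewed as an element of $\GL_F(K)=\GL_2(F)$ after choosing an $F$-basis of $K$.

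Applying Theorem \ref{thm:plecact-CM}, the point $\alpha\mathcal{P}$ has data $(K,\alpha\Phi; f\mathfrak{a},\chi t, f\circ\eta)$, and the same recipe gives $\pi_0(\alpha\mathcal{P}) = [(y_\infty',\, \det(a')\cdot N_{K/F}(f)\cdot N_{K/F}(\eta))]$ for a compatibly chosen $a'$. The algebraic heart of the proof is then the computation of $\det(a')/\det(a)$. Writing the $F$-bilinear pairing $\tilde E_t(u,v) := \Tr_{K/F}(t u c(v))$ as $c_t\cdot\det(u,v)$ for a scalar $c_t\in F^\times$ that depends $F$-linearly on $t$, the conditions $a^*\psi\in R(\Q)E_t$ and $(a')^*\psi\in R(\Q)E_{\chi t}$, combined with the identity $c_{\chi t} = \chi c_t$ (valid since $\chi\in F^\times$ commutes with $\Tr_{K/F}$), force $\det(a')/\det(a)\in\chi\cdot R(\Q)$. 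Hence the finite part of $\pi_0(\alpha\mathcal{P})/\pi_0(\mathcal{P})$ equals $\chi\cdot N_{K/F}(f) = u$ modulo $R(\Q)$, exactly matching the finite part of $\plecpiprod(\alpha)$ under the embedding of Proposition \ref{lem:plecCM-inside-pi0}.

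It remains to show that the archimedean ratio $y_\infty'/y_\infty$ lies in $R(\R)^0 R(\Q)$. The change of CM type $\Phi\to\alpha\Phi$ induces, via the identification of Lemma \ref{lem:XVZ}, sign flips of the associated Hodge structure at precisely the $x\in\Sigma$ with $m_x = 1$ in the sense of Lemma \ref{lem:halftr-prop}(ii); these are absorbed by the scalar $\chi\in F^\times$, which by the proof of Theorem \ref{thm:plecact-CM} lies in $m^{-1}F^\times_{>0}$ for an $m\in F^\times$ satisfying $\sgn(x(m)) = (-1)^{m_x}$, so that $\chi m\in F^\times_{>0}$ maps to the identity component of $R(\R)$. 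The principal obstacle is this archimedean bookkeeping: making compatible choices of $a$ and $a'$ so that the induced sign patterns are related as predicted by the $(m_x)$, and verifying that the scalar correction by $\chi$ really lands in $R(\R)^0$ modulo $R(\Q)$. Once this is in place, combining with the finite-part computation yields $\pi_0(\alpha\mathcal{P}) = \plecpiprod(\alpha)\cdot\pi_0(\mathcal{P})$, which together with Proposition \ref{lem:plecCM-inside-pi0} gives the desired $\plecgpcm$-equivariance.
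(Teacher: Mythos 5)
Your overall strategy is the paper's: compute $\pi_0(\mathcal{P})$ and $\pi_0(\alpha\mathcal{P})$ explicitly in $\Sh(R,\VZ)\simeq\pi_0(C_R)$ via the moduli description and the determinant map $d$, and compare with multiplication by $u$, using $\plecpiprod(\alpha)=\lambda(\alpha)=[u R(\Q)_{>0}]$ from Proposition \ref{lem:plecCM-inside-pi0}. Your finite-part argument is essentially sound: from $E_t(\cdot,\cdot)=r\,\psi(a(\cdot),a(\cdot))$ and $E_{\chi t}(\cdot,\cdot)=r'\,\psi(a'(\cdot),a'(\cdot))$ with $r,r'\in R(\Q)$, non-degeneracy of $\Tr_{F/\Q}$ gives $c_t=r\det(a)$ and $c_{\chi t}=r'\det(a')$, hence $\det(a')/\det(a)=(r/r')\chi$, so the finite components of $\pi_0(\alpha\mathcal{P})$ and of $u\cdot\pi_0(\mathcal{P})$ differ only by $r/r'\in R(\Q)$. (A small correction: the finite component is $d(a\circ\eta)$, not $\det(a)\cdot N_{K/F}(\eta)$ --- the level structure $\eta$ is only $\A_{F,f}$-linear, so $N_{K/F}(\eta)$ is not defined; this does not matter since the $\eta$-contribution cancels in the ratio, while multiplication by $f$ contributes $N_{K/F}(f)$ as you say.)

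The genuine gap is the archimedean step, which you flag but do not close. After cancelling $r/r'$ by the diagonal $R(\Q)$-action, what must actually be shown is $\sgn(y'_x/y_x)=\sgn\bigl(x(r/r')\bigr)$ for every $x\in\Sigma$; the sketch ``sign flips at $m_x=1$ are absorbed by $\chi$'' does not establish this, and its naive implementation --- keep the same $a$ and absorb the flip into the scalar $\chi$ --- fails because $\chi$ need not lie in $R(\Q)$, so $a$ is in general not admissible for the class $R(\Q)E_{\chi t}$ (this is exactly why a second isomorphism $a'$ is needed). The paper avoids all sign bookkeeping by adapted choices: $a(1)=(1,0)^{T}$, $a(1/t)=(0,1)^{T}$ and $a'(1)=(1,0)^{T}$, $a'(1/(\chi t))=(0,1)^{T}$, so that $r=r'=-2$, both Hodge parameters lie in the all-plus component $(1)_{x\in\Sigma}$ (using $\im\varphi(t)>0$ for $\varphi\in\Phi$ and $\im\rho(\chi t)>0$ for $\rho\in\alpha\Phi$, the latter already proved in Theorem \ref{thm:plecact-CM}), and $a'=\diag(1,\chi)\circ a$ yields $d(a'\circ f\circ\eta)=\chi N_{K/F}(f)\,d(a\circ\eta)=u\,d(a\circ\eta)$ on the nose. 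Alternatively, within your general setup the missing identity follows from positivity of the polarisation: $\psi(\cdot,h_a(i)\cdot)$ is, on the $x$-component, definite of sign $\sgn(x(r))$ up to one global sign, and likewise for $a'$, $\chi t$, $r'$, which gives $y'_x/y_x=\sgn(x(r/r'))$. Supplying either of these completes your proof; as written, the equivariance at the infinite places is asserted rather than proved.
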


\begin{proof}
	Let
	\[
	\mathcal{P}=\left[\C^\Phi/\Phi(\mathfrak{a}),i_\Phi,R(\Q) E_t, \eta \overline{Z(\Q)}\right]
	\]
	be a CM point of $\Sh(G,X)$, with $K$ a totally imaginary quadratic extension of $F$ and $(K,\Phi;\mathfrak{a},t)$ as in \eqref{eq:compCMpt}.
	Write the CM type $\Phi$ as $\Phi=\{\varphi_x~|~ x\in\Sigma_F\}$, where $\varphi_x|_F=x$ for all $x\in\Sigma_F$.
	We assume that $\im \varphi_x(t)>0$ for all $x\in\Sigma_F$.
	The proof proceeds in three steps.
	
	\begin{itemize}
		\item[(a)] Calculation of $\pi_0(\mathcal{P})$.
	\end{itemize}
	We start with the calculation of the point $[h,g]\in\Sh(G,X)$ corresponding to $\mathcal{P}$.
	Note that $H_1(\C^\Phi/\Phi(\mathfrak{a}),\Q)=K$ and $h_\Phi\colon\mathbb{S}\to \GL(H_1(\C^\Phi/\Phi(\mathfrak{a}),\R))$, the Hodge structure of the abelian variety $\C^\Phi/\Phi(\mathfrak{a})$, is given on real points by
	\begin{align*}
	h_\Phi\colon \C^\times &\longrightarrow \Aut_{K\otimes_\Q\R}(H_1(\C^\Phi/\Phi(\mathfrak{a}),\R)) = (K\otimes_\Q\R)^\times \subset \GL_\R(\C^\Phi) \\
	i &\longmapsto \begin{pmatrix}
	0 & -1 & & & \\
	1 & 0 & & & \\
	& & \ddots & & \\
	& & & 0 & -1 \\
	& & & 1 & 0
	\end{pmatrix}
	\end{align*}
	with respect to the $\R$-basis 
	\[
	\left\{
	\begin{pmatrix}
	1 \\ 0 \\ \vdots \\ 0
	\end{pmatrix},
	\begin{pmatrix}
	i \\ 0 \\ \vdots \\ 0
	\end{pmatrix},
	\ldots,
	\begin{pmatrix}
	0 \\ \vdots \\ 0 \\ 1
	\end{pmatrix},
	\begin{pmatrix}
	0 \\ \vdots \\ 0 \\ i
	\end{pmatrix}
	\right\}
	\]
	of $\C^\Phi=K\otimes_\Q\R$.
	Let $a \colon K \xrightarrow{~\sim~} F^2=V$ be the isomorphism of $F$-vector spaces given by $1 \mapsto \begin{pmatrix} 1 \\ 0 \end{pmatrix}$ and $\frac{1}{t} \mapsto \begin{pmatrix} 0 \\ 1 \end{pmatrix}$.
	Then $a$ satisfies the condition in \eqref{eq:doublestar} for the quadruple $\mathcal{P}$ because $E_t(u,v)=-2\psi(a(u),a(v))$ for all $u,v\in K$, and $-2\in\Q^\times \subset R(\Q)$.
	A direct calculation shows that $a\circ h_\Phi \circ a^{-1}$ corresponds to $(\beta_x i)_{x\in\Sigma}$, where $\beta_x:=\im \varphi_x\left(\frac{1}{t}\right)>0$, under the isomorphism of Lemma \ref{lem:XVZ}.
	By part \eqref{item:pi0map} of Lemma \ref{lem:pi0SV} we therefore have
	\begin{align}\label{eq:pi0-of-A}
	\pi_0(\mathcal{P})=[(1)_{x\in\Sigma},d(a\circ\eta)]\in\Sh(R,\VZ).
	\end{align}
	
	\begin{itemize}
		\item[(b)] Calculation of $\pi_0(\alpha \mathcal{P})$, for $\alpha\in\plecgpcm$.
	\end{itemize}
	Let $\alpha\in\plecgpcm$ and choose $f\in\A_{K,f}^\times$ such that $f_\Phi(\alpha)=f K^\times$.
	Let $u\in R(\A_f)$ be such that $\chi_F\circ \Product(\alpha) = u F^\times_{>0}$, and let $\chi:=\frac{u}{N_{K/F} f}\in F^\times$.
	By \eqref{eq:def-plec-act-CMpts-general} we have
	\[
	\alpha\mathcal{P}=[\C^{\alpha\Phi}/\alpha\Phi(f \mathfrak{a}),i_{\alpha\Phi},R(\Q) E_{\chi t},f\circ\eta\overline{Z(\Q)}].
	\]
	Following the same calculation as in the previous step, but with $\Phi$ replaced by $\alpha\Phi$, $t$ replaced by $\chi t$, and $a$ replaced by $a'$ with $a'(1)=\begin{pmatrix} 1 \\ 0 \end{pmatrix}$ and $a'\left(\frac{1}{\chi t}\right)=\begin{pmatrix} 0 \\ 1 \end{pmatrix}$, leads to
	\[
	\pi_0(\alpha\mathcal{P})=[(1)_{x\in\Sigma},d(a'\circ f \circ\eta)]\in\Sh(R,\VZ).
	\]
	
	Using $a'=\begin{pmatrix} 1 & \\ & \chi \end{pmatrix} \circ a$ we see that
	\[
	d(a'\circ f \circ\eta)=\chi N_{K/F}(f) ~d(a\circ\eta) = u ~d(a\circ\eta),
	\]
	so we conclude that
	\begin{align}\label{eq:pi0-of-plec-conj-A}
	\pi_0(\alpha\mathcal{P}) = [(1)_{x\in\Sigma},u ~d(a\circ\eta)].
	\end{align}
	
	\begin{itemize}
		\item[(c)] Calculation of $\alpha(\pi_0(\mathcal{P}))$.
	\end{itemize}
	By Definition \ref{def:pi0-plec}, the element $\alpha$ acts on $\pi_0(\mathcal{P})$ as multiplication by $\plecpiprod(\alpha)\in \pi_0(C_R)$.
	By \eqref{diag:CMgp-pigp-incl} we have $\plecpiprod(\alpha)=\lambda(\alpha)$, which by definition of $\lambda$ and $u$ is equal to the image of $u R(\Q)_{>0}$ inside $\pi_0(C_R)$.
	Now the identification of Lemma \ref{lem:pi0CR} together with \eqref{eq:pi0-of-A} yields
	\[
	\alpha(\pi_0(\mathcal{P}))=[(1)_{x\in\Sigma},u ~d(a\circ\eta)],
	\]
	which is equal to $\pi_0(\alpha\mathcal{P})$ by \eqref{eq:pi0-of-plec-conj-A}.
\end{proof}

\begin{remark}
	A priori, the group $\plecgpcm$ as well as its action on CM points depend on the choice of splitting $\chi_F$ of $r_F$.
	However, Theorem \ref{thm:pi0-plec-equiv} shows equivariance for the $\plecgpcm$-action for any choice of $\chi_F$.
\end{remark}

\bibliographystyle{alpha}
\bibliography{literature} 
\end{document}